\newcommand{\rmd}{\mathrm{d}}
\newtheorem{fact}{Fact}
\begin{document}

\title{Characterization of manifolds of constant curvature by ruled surfaces\footnote{This is a pre-print of an article published in São Paulo Journal of Mathematical Sciences (2022). The final authenticated version is available online at \url{https://doi.org/10.1007/s40863-022-00319-7}.}}

\author{Luiz C. B. da Silva         \and
        Jos\'e D. da Silva 
}


\institute{Da Silva, L. C. B. \at Department of Physics of Complex Systems,\\ 
Weizmann Institute of Science,              Rehovot 7610001, Israel\\
         \email{luiz.da-silva@weizmann.ac.il}
           \and 
           Da Silva, J. D. \at
             Departamento de Matem\'atica,\\
             Universidade Federal Rural de Pernambuco, 
         52171-900, Recife, Pernambuco, Brazil\\
         \email{jose.dsilva@ufrpe.br}
}
\date{\today}

\maketitle

\begin{abstract}
We investigate ruled surfaces in 3d Riemannian manifolds, i.e., surfaces foliated by
geodesics. In 3d space forms, we find the striction curve, distribution parameter, and
the first and second fundamental forms, from which we obtain the Gaussian and
mean curvatures. We also provide model-independent proof for the known fact that
extrinsically flat surfaces in space forms are ruled. This proof allows us to identify
the necessary and sufficient condition the curvature tensor must satisfy for an extrinsically
flat surface in a generic 3d manifold to be ruled. Further, we show that if a 3d
manifold has an extrinsically flat surface tangent to any 2d plane and if they are all
ruled surfaces, then the manifold is a space form. As an application, we prove that
there must exist extrinsically flat surfaces in the Riemannian product of the hyperbolic
plane, or sphere, with the reals, and that do not make a constant angle with the
real direction.    
\keywords{Ruled surface \and space form \and flat surface \and extrinsically flat surface \and product manifold \and constant angle}
\subclass{53A05 \and 53A35 \and 53B20 \and 53C42}
\end{abstract}

\section{Introduction}

Special classes of surfaces play a fundamental role in differential geometry, they help visualize key concepts and often allows us to test conjectures. Of particular importance are \emph{ruled surfaces}, i.e., surfaces obtained from a 1-parameter family of geodesics. A remarkable result in the  three-dimensional (3d) Euclidean space $\mathbb{E}^3$ asserts that every flat surface is ruled. Indeed, flat surfaces are generically described as a union of open pieces of tangent surfaces, cones, and cylinders \cite{DoCarmo2016DG,Kreyszig1991}. In general, the Gaussian curvature {of a ruled surface is} always non-positive and depends explicitly on the so-called distribution parameter \cite{DoCarmo2016DG}, which relates the tangents of the generating curve and the vector field giving the direction of the family of geodesics to the derivative of this vector field in the direction of the generating curve. By introducing the concept of striction curves, singular points on Euclidean ruled surfaces are allowed and, when they exist, they are located along the striction curve \cite{DoCarmo2016DG}.  Despite that ruled surfaces can be defined on any manifold, to the best of our knowledge, no systematic treatment of the fundamentals of these surfaces have been done, even in space forms. Indeed, except for Euclidean space, ruled surfaces are generally used only as a tool to obtain special examples of surfaces, such as minimal surfaces in space forms \cite{doCarmoTAMS1983,LawsonAM1970}, understand certain families of curves in the 3-sphere \cite{HuangJGP2019}, constant angle surfaces in the product manifolds $\mathbb{S}^2\times\mathbb{R}$ and $\mathbb{H}^2\times\mathbb{R}$ \cite{D+07,DM09}, the characterization of complete flat surfaces in $\mathbb{H}^2\times\mathbb{R}$ \cite{BarbosaJG2020} and space forms \cite{HondaTMJ2012,PortnoyPJM1975}, and surfaces of finite type in the 3-sphere \cite{KimMathMDPI2020}, just to name a few.  To fill this gap, here we systematically study ruled surface in space forms. More precisely, we shall compute their first and second fundamental forms, Gaussian and mean curvatures, and we also introduce the concept of distribution parameter and striction curves.

After establishing the fundamentals of ruled surfaces in space forms, we provide a model-independent proof of the fact that extrinsically flat surfaces in space forms must be ruled (Theorem \ref{thr::ExtFlatImpliesRuledInSpcForm}). To the best of our knowledge, this is the first proof of this type. More than just an alternative proof, this  allows us to understand why the theorem is true in constant curvature, which later leads us to the main result of this work ({Theorem \ref{Theor::CharSpcFraRuledSurf}}):
\newline
\newline
\textbf{Main Theorem:} \textit{Let $M^3$ be {a connected} 3d manifold with the property that there exists an extrinsically flat surface tangent to any 2d plane and that every extrinsically flat surface in $M^3$ is ruled. Then, $M^3$ must have constant sectional curvature.}
\newline

As an application, we use our Main Theorem to prove that there must exist extrinsically flat surfaces in the product manifolds $\mathbb{H}^2\times\mathbb{R}$ and $\mathbb{S}^2\times\mathbb{R}$ that do not make a constant angle with the $\mathbb{R}$-direction (Theorem \ref{Thr::ExistsExtFlatSurfNonCteAngleInH2RandS2R}). 

The remaining of this work is divided as follows. After preliminaries concerning Riemannian space forms in Sect. 2, we establish in Sect. 3 the fundamentals of ruled surfaces. We compute  the first and second fundamental forms in Subsect. 3.1., characterize helicoids as the only minimal ruled surfaces in Subsect. 3.2., compute the striction curve in Subsect. 3.3., which together with the distribution parameter introduced in Subsect. 3.4. allow us to locate the singular points of ruled surfaces. With the distribution parameter, we also show that extrinsically flat ruled surfaces are generically described as a union of open pieces of tangent surfaces, cones, and cylinders. It is worth mentioning that the computations in Sect. 3 are all model-dependent. {The final answers are model-independent, though.} In Sect. 4, we adopt a model-independent approach and prove that the implication ``extrinsically flat $\Rightarrow$ ruled" is essentially a characteristic property of constant curvature. We also apply these results to extrinsically flat surfaces in $\mathbb{H}^2\times\mathbb{R}$ and $\mathbb{S}^2\times\mathbb{R}$.  Finally, we present our concluding remarks in the last section.

\section{Preliminaries}

Let  $\mathbb{E}^{4}$ be the $4$-dimensional Euclidean space, i.e., $\mathbb{R}^{4}$ equipped with the standard Euclidean metric $\langle X,Y\rangle_e=\sum_{i=1}^{4}x_iy_i$, and  $\mathbb{E}_1^{4}$ be the 4d Lorentzian space, i.e., $\mathbb{R}^{4}$ equipped with the pseudo-metric  $\langle X,Y\rangle_1=-x_1y_1+\sum_{i=2}^{4}x_iy_i$. In this work, we will be primarily interested in the 3d sphere $\mathbb{S}^{3}(r)$ and in the 3d hyperbolic space $\mathbb{H}^{3}(r)$ modeled as submanifolds of $\mathbb{E}^{4}$ and $\mathbb{E}_1^{4}$, respectively. More precisely, consider
$$
\mathbb{S}^{3}(r) = \{q\in\mathbb{R}^{4}:\langle q,q\rangle_e=r^2\}
\mbox{ and }
\mathbb{H}^{3}(r) = \{q\in\mathbb{R}^{4}:\langle q,q\rangle_1=-r^2,q_1>0\}
$$
equipped with the metric induced from the corresponding ambient space. We shall denote the induced metric by $\langle\cdot,\cdot\rangle$. (The context will make clear whether the induced inner product $\langle\cdot,\cdot\rangle$ comes from $\langle\cdot,\cdot\rangle_e$ or $\langle\cdot,\cdot\rangle_1$.) 


Denoting by $\nabla$ the Levi--Civita connection on $\mathbb{S}^{3}(r)$ (or $\mathbb{H}^{3}(r)$) and by $\nabla^0$ the connection on $\mathbb{E}^{4}$ (or $\mathbb{E}^{4}_1$, respectively), they are related by the Gauss formula as
\begin{equation}
(\nabla_{X}\,Y)(q)=(\nabla^0_{X}\,Y)(q)\pm\frac{1}{r^2}\langle X,Y\rangle_q\,q,\label{eq::CovDerAndUsualDer}
\end{equation}
where $q$ is the position vector, i.e., the canonical immersion $q:\mathbb{S}^{3}(r)\to\mathbb{E}^{4}$ for the plus sign and $q:\mathbb{H}^{3}(r)\to\mathbb{E}^{4}_1$ for the minus sign. In these models, the exponential map at $p\in\mathbb{S}^{3}(r)$ and  $p\in\mathbb{H}^{3}(r)$ are given by
\[
\mbox{exp}_p(vZ)=\cos\left(\frac{v}{r}\right)p+r\sin\left(\frac{v}{r}\right)Z
\mbox{ and }
\mbox{exp}_p(vZ)=\cosh\left(\frac{v}{r}\right)p+r\sinh\left(\frac{v}{r}\right)Z,
\]
where $Z\in \mathbb{S}^2(1)\subset T_p\mathbb{S}^{3}(r)$ and  $Z\in\mathbb{S}^2(1)\subset T_p\mathbb{H}^{3}(r)$, respectively. 

In $\mathbb{E}^4$ and $\mathbb{E}_1^4$ it is possible to introduce a ternary product between vectors $u,v,w$ according to the formal determinants
\begin{equation}
    u\times v\times w = \left\vert
    \begin{array}{cccc}
        u_1 & u_2 & u_3 & u_4\\
        v_1 & v_2 & v_3 & v_4\\
        w_1 & w_2 & w_3 & w_4\\
        \mathbf{e}_1 & \mathbf{e}_2 & \mathbf{e}_3 & \mathbf{e}_4 \\
    \end{array}
    \right\vert
\mbox{ and }u\times_1 v\times_1 w = \left\vert
    \begin{array}{cccc}
        u_1 & u_2 & u_3 & u_4\\
        v_1 & v_2 & v_3 & v_4\\
        w_1 & w_2 & w_3 & w_4\\
        -\mathbf{e}_1 & \mathbf{e}_2 & \mathbf{e}_3 & \mathbf{e}_4 \\
    \end{array}
    \right\vert,
\end{equation}
where $\{\mathbf{e}_1,\mathbf{e}_2,\mathbf{e}_3,\mathbf{e}_4\}$ is the canonical basis of $\mathbb{R}^4$. These ternary products allow us to introduce vector products in $\mathbb{S}^3(r)$ and $\mathbb{H}^3(r)$ by taking the third vector in the ternary product  as the position vector. In other words, given  $X,Y\in T_q\mathbb{S}^3(r)$ [or $X,Y\in T_q\mathbb{H}^3(r)$], the vector product between them is the unique tangent vector $X\times Y$ [or $X\times_1Y$] satisfying for all $W\in T_q\mathbb{S}^3(r)$ [or $W\in T_q\mathbb{S}^3(r)$] the relation
\begin{equation}\label{def::VectorProduct}
    \langle X\times Y,W\rangle = \det(q,X,Y,W) \,\,[\mbox{or }\langle X\times_1 Y,W\rangle_1 = \det(q,X,Y,W)].
\end{equation}
When the point $q$ is clear from the context, we shall write $X\times Y=X\times Y\times q$ and $X\times_1 Y=X\times_1 Y\times_1 q$.

Let $M^{3}$ be a Riemannian manifold of class at least $C^3$ with metric $\langle\cdot,\cdot\rangle$, Levi--Civita connection $\nabla$, and Riemann curvature tensor
$$R(X,Y)W=\nabla_Y\nabla_XW-\nabla_X\nabla_YW+\nabla_{[X,Y]}W,$$ where $X,Y$, and $W$ are tangent vector fields in $M^3$. If $X,Y\in T_pM^3$, the \textit{sectional curvature} of the plane $\mbox{span}\{X,Y\}\vert_p\subset T_pM^3$ at $p$ is 
\begin{equation}
K_p(X,Y)=\frac{\langle R(X,Y)X,Y\rangle_p}{\langle X,X\rangle_p\langle Y,Y\rangle_p-\langle X,Y\rangle_p^2}.
\end{equation}
We say that $M^3$ is a \emph{Riemannian space form} if it has constant sectional curvature $K_0$. In addition, $M^3$ is locally isometric to a sphere $\mathbb{S}^{3}(r)$ if $K_0=\frac{1}{r^{2}}$, to an Euclidean space $\mathbb{E}^{3}$ if $K_0=0$, or to a hyperbolic space $\mathbb{H}^{3}(r)$ if $K_0=-\frac{1}{r^{2}}$  \cite{doCarmo1992,Spivak1979v4}. In terms of the curvature tensor $R_{ijk\ell}=\langle R(e_i,e_j)e_k,e_{\ell}\rangle$, where  $\{e_i\}$ is an orthonormal basis of $T_pM^3$, and $\delta_{ij}$ is the usual Kronecker delta tensor, a manifold is a  space form if, and only if, at any $p\in M^3$ it is valid $R_{ijk\ell}=K_0(\delta_{ik}\delta_{j\ell}-\delta_{i\ell}\delta_{jk})$.   

As it is well known, in Euclidean space the intrinsic $K_{int}$ and extrinsic $K_{ext}$ Gaussian curvatures of a surface coincide, i.e., the Gaussian curvature computed from the induced metric is equal to the product of the principal curvatures. In a Riemannian manifold this is no longer true, instead the intrinsic and extrinsic curvatures differ by the value of the sectional curvature of the tangent plane of the surface. More precisely, if $\Sigma^2\subset M^{3}$ is an oriented surface with unit normal $\xi$, the \textit{shape operator}  of $\Sigma^2$ is defined by
$
S_p(X)=-(\nabla_X\xi)(p).
$ Since $S_p$ is symmetric with respect to $\langle\cdot,\cdot\rangle$, $S_p$ has $2$ eigenvalues $\kappa_1,\kappa_2$, known as the \textit{principal curvatures}.  The \emph{extrinsic Gaussian curvature} $K_{ext}$ of $\Sigma^2$ is defined as $K_{ext}=\kappa_1\kappa_2$, while the \emph{intrinsic Gaussian curvature} $K_{int}$ of $\Sigma^2$ is defined as the sectional curvature computed from the induced metric of $\Sigma^2$. Now, if $K_{sec}$ is the sectional curvature of $T_p\Sigma^2\subset T_pM^3$, then the Gauss Theorema Egregium for $\Sigma^2$ in $M^3$ reads \cite{doCarmo1992,Spivak1979v4}
\begin{equation}
    K_{int} - K_{ext} = K_{sec}.
\end{equation}
Thus, in a space form the two Gaussian curvatures differ by a constant.

\section{Ruled surfaces in the sphere and hyperbolic space}

In this section, we compute the coefficients of the first and second fundamental forms of ruled surfaces from which we compute the Gaussian and mean curvatures. The expression for the extrinsic Gaussian curvature {assumes} its simplest form after introducing the concepts of striction curve and distribution parameter. The striction curves allow us to understand when, and where, ruled surfaces have singularities, i.e., points where the metric degenerates, while the distribution parameter helps in finding a (local) classification of extrinsically flat ruled surfaces.

{Without loss of generality, we may do the explicit calculations for $\mathbb{S}^3(r)$ only. For surfaces in $\mathbb{H}^3(r)$, we can proceed in an analogous way by substituting the trigonometric functions by their hyperbolic relatives. 
}

\subsection{First and second fundamental forms}

Let $\alpha :I\rightarrow\mathbb{S}^3(r)$ be a regular curve and  $Z(u)$ be a unit vector field along $\alpha(u)$ such that $\forall u\in I\subset\mathbb{R}$, $\nabla_{\alpha'}Z\neq0$, i.e., the ruled surface is not a cylinder. A ruled surface $\Sigma^2$ with directrix (or generating curve)  $\alpha$ and rulings tangent to $Z$ is parametrized as
\begin{equation}
    \psi(u,v)=\text{exp}_{\alpha(u)}(vZ(u))=\cos{\left(\dfrac{v}{r}\right)}\alpha(u)+r\sin{\left(\dfrac{v}{r}\right)}Z(u).
\end{equation}
Since $\gamma_u(v)=\exp_{\alpha(u)}(vZ(u))$ is a geodesic with initial velocity $Z(u)$ at $\alpha(u)$, then the parallel transport of $Z$ along the rulings, denoted by $PZ$, is simply given by
\begin{equation}\label{eq::PZ}
    (PZ)(u,v)=\frac{\partial\psi}{\partial v}(u,v)=-\frac{1}{r}\sin\left(\frac{v}{r}\right)\alpha(u)+\cos\left(\frac{v}{r}\right)Z(u).
\end{equation}

The coefficients of the \emph{first fundamental form} will be denoted by $g_{11}=\langle\psi_u,\psi_u\rangle$, $g_{12}=\langle\psi_u,\psi_v\rangle$, and $g_{22}=\langle\psi_v,\psi_v\rangle$. On the other hand, the coefficients of the \emph{second fundamental form} are $h_{11}=\langle\nabla_{\psi_u}\psi_{u},\xi\rangle$, $h_{12}=\langle\nabla_{\psi_u}\psi_{v},\xi\rangle$, and $h_{22}=\langle\nabla_{\psi_v}\psi_{v},\xi\rangle$, where $\xi=\frac{1}{\sqrt{g}}\psi_u\times\psi_v$  denotes the unit normal, $g=\det g_{ij}=\Vert\psi_u\times\psi_v\Vert^2$, and $\times$ is the vector product between tangent vectors on the sphere defined previously.

The tangent planes are spanned by the tangent vectors
\begin{equation}\label{eq::TangentVecPsi}
    \psi_u = c_v\alpha'+rs_vZ'=c_v\alpha'+rs_vDZ-\frac{s_v}{r}\langle\alpha',Z\rangle\alpha\mbox{ and }\psi_v = PZ,
\end{equation}
where we adopted the shorthand notation $c_v=\cos(\frac{v}{r})$, $s_v=\sin(\frac{v}{r})$, and $D=\nabla_{\alpha'}$. Then, the coefficients of the first fundamental form are
\begin{eqnarray}
    g_{11} & = &\langle c_v\alpha'+rs_vDZ,c_v\alpha'+rs_vDZ\rangle+\frac{s_v^2}{r^2}\langle\alpha',Z\rangle^2\langle\alpha,\alpha\rangle \nonumber\\
    & = &c_v^2\langle\alpha',\alpha'\rangle+2rs_vc_v\langle\alpha',DZ\rangle+r^2s_v^2\langle DZ,DZ\rangle+s_v^2\langle\alpha',Z\rangle^2,
\end{eqnarray}
\begin{equation}
    g_{22} = 1,\mbox{ and } g_{12}=\frac{s_v^2}{r^2}\langle\alpha',Z\rangle\langle\alpha,\alpha\rangle+c_v^2\langle\alpha',Z\rangle+rs_vc_v\langle Z,DZ\rangle = \langle\alpha',Z\rangle.
\end{equation}

To compute the second fundamental form, we need the following lemma. It will prove useful in finding the Gaussian and mean curvatures, since one is able to perform the non-trivial computations  along the generating curve $\alpha$ only. (Note that we should see Eq. \eqref{Eq::fromAWDwToAwDw} below as an auxiliary identity in $\mathbb{R}^4$ since $Z$ and $PZ$ are tangent vectors on distinct points of $\mathbb{S}^3(r)$, {or $\mathbb{H}^3(r)$}.) 

\begin{lemma}\label{lemma::fromAWDwToAwDw}
Let $PZ$ denote the parallel transport of $Z$ along the rulings of the ruled surface $\Sigma^2\subset\mathbb{S}^3(r)$ {or $\Sigma^2\subset\mathbb{H}^3(r)$}. If $X$ and $Y$ are two vector fields in $\mathbb{S}^3(r)$ {or $\mathbb{H}^3(r)$}, then in {$\mathbb{R}^4$} it is valid that
\begin{equation}\label{Eq::fromAWDwToAwDw}
{    \det(X,Y,PZ,\psi) =\frac{1}{c_v}\det(X,Y,Z,\psi),   }
\end{equation}
{where we have adopted the shorthand notation $c_v=\cos(\frac{v}{r})$ in $\mathbb{S}^3(r)$ and $c_v=\cosh(\frac{v}{r})$ in $\mathbb{H}^3(r)$.}
\end{lemma}
\begin{proof}
Let $X$ and $Y$ be two vector fields in $\Sigma^2\subset\mathbb{S}^3(r)$. If we write $PZ=A_1\alpha+B_1Z$ and $\psi=A_2\alpha+B_2Z$, where $A_1=-\frac{1}{r}s_v,B_1=A_2=c_v$, and $B_2=rs_v$, then
\begin{eqnarray}
    \det(X,PZ,Y,\psi) 
    &=& A_1B_2\det(X,\alpha,Y,Z)+B_1A_2\det(X,Z,Y,\alpha)\nonumber\\
    &=& -\frac{A_1B_2}{A_2}\det(X,Z,Y,A_2\alpha)+B_1\det(X,Z,Y,A_2\alpha)\nonumber\\
    &=& \frac{B_1A_2-A_1B_2}{A_2}\det(X,Z,Y,A_2\alpha+B_2Z).\nonumber
\end{eqnarray}
Finally, substituting for $A_i$ and $B_i$ gives the desired result.
\qed \end{proof}

\begin{proposition}\label{Prop::2ndffRulSurf}
The coefficients $h_{ij}$ of the second fundamental form of a ruled surface $\Sigma^2\subset\mathbb{S}^3(r)$, {or $\Sigma^2\subset\mathbb{H}^3(r)$,} are
\[
    h_{11} = \frac{c_v(D\alpha',\alpha',Z)+rs_v[(D\alpha',DZ,Z)
     +(D^2Z,\alpha',Z)]}{\sqrt{g}}+\frac{r^2s^2_v(D^2Z,DZ,Z)}{c_v\sqrt{g}},
\]
\[
h_{12} = \frac{1}{c_v\sqrt{g}}(\alpha',Z,DZ),\mbox{ and } h_{22}=0,    
\]
where $g=\det g_{ij}$ is the determinant of the metric, $(X,Y,W)=\langle X, Y\times W\rangle$ is the mixed product, {$s_v=\sin(\frac{v}{r})$ and $c_v=\cos(\frac{v}{r})$ in $\mathbb{S}^3(r)$, $s_v=\sinh(\frac{v}{r})$ and $c_v=\cosh(\frac{v}{r})$ in $\mathbb{H}^3(r)$,} and $D=\nabla_{\alpha'}$.
\end{proposition}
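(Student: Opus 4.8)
The plan is to convert each coefficient $h_{ij}=\langle\nabla_{\psi_i}\psi_j,\xi\rangle$ into a single four-by-four determinant in $\mathbb{E}^4$ and then strip off the $v$-dependence with Lemma~\ref{lemma::fromAWDwToAwDw}. The starting point is the Gauss formula \eqref{eq::CovDerAndUsualDer}: since $\nabla_{\psi_i}\psi_j=\nabla^0_{\psi_i}\psi_j+\tfrac{1}{r^2}\langle\psi_i,\psi_j\rangle\psi$ and the unit normal $\xi$ is tangent to $\mathbb{S}^3(r)$, so that $\langle\psi,\xi\rangle=0$, the normal correction drops out and $h_{ij}=\langle\psi_{ij},\xi\rangle$, with $\psi_{ij}$ the ordinary second partial derivative. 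Writing $\xi=\tfrac{1}{\sqrt g}\,\psi_u\times\psi_v$ and invoking the defining property \eqref{def::VectorProduct} of the vector product yields the master identity
\[
h_{ij}=\frac{1}{\sqrt g}\det\!\left(\psi,\psi_u,\psi_v,\psi_{ij}\right),
\]
so that each coefficient is a mixed product based at the surface point $\psi$. The case $h_{22}$ is then immediate: differentiating \eqref{eq::PZ} once more gives $\psi_{vv}=-\tfrac{1}{r^2}\psi$ (equivalently $\nabla_{\psi_v}\psi_v=0$, because the rulings are geodesics), so the determinant has a repeated column and $h_{22}=0$.

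For $h_{12}$ I would compute $\psi_{uv}=\partial_u\psi_v=-\tfrac{s_v}{r}\alpha'+c_vZ'$ and substitute it into the master identity. The two ``position-type'' arguments of the determinant are $\psi$ and $\psi_v=PZ$; after cyclically permuting them into the last two slots I would apply Lemma~\ref{lemma::fromAWDwToAwDw} to trade $PZ$ for $Z$, which is exactly the step that produces the factor $\sec(v/r)$. Replacing $Z'$ by $DZ$ through \eqref{eq::TangentVecPsi} and discarding every expansion term carrying a repeated vector then collapses the determinant to the mixed product $(\alpha',Z,DZ)$, giving the stated $h_{12}$.

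The substantive computation, and the step I expect to be the main obstacle, is $h_{11}$. Here $\psi_{uu}=c_v\alpha''+rs_vZ''$, and the first task is to convert the flat second derivatives $\alpha''$ and $Z''$ into the covariant objects $D\alpha'=\nabla_{\alpha'}\alpha'$, $DZ$ and $D^2Z$ by applying the Gauss formula once to $\alpha''$ and twice to $Z''$. The delicacy lies precisely in this bookkeeping: differentiating $Z'=DZ-\tfrac1{r^2}\langle\alpha',Z\rangle\alpha$ a second time produces not only the expected $\alpha$-corrections but also a tangential correction along $\alpha'$, and one must track these carefully — the pure $\alpha$-terms vanish (because $\psi$, $\psi_v$ and $\alpha$ are coplanar, so three of the four determinant columns are dependent), whereas the $\alpha'$-corrections survive. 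I would then substitute into the master identity, move $\psi_v=PZ$ and $\psi$ into the last slots, and apply Lemma~\ref{lemma::fromAWDwToAwDw} term by term; the $\sec(v/r)$ it supplies is what leaves the residual $1/c_v$ weighting the $(D^2Z,DZ,Z)$ contribution, while the remaining pieces assemble into $c_v(D\alpha',\alpha',Z)$ and $rs_v[(D\alpha',DZ,Z)+(D^2Z,\alpha',Z)]$.

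The only genuine difficulty throughout is organizational. One must expand several four-vector determinants built from $\{\alpha,\alpha',Z,DZ,D^2Z\}$ and reduce them — via the antisymmetry of the determinant and the repeated-column cancellations — to the handful of surviving mixed products, with the trigonometric coefficients assembling through the identity $c_v^2+s_v^2=1$ (the same identity that drives Lemma~\ref{lemma::fromAWDwToAwDw}). Once the reduction scheme is fixed and one has verified exactly which correction terms persist, each individual determinant is routine multilinear algebra.
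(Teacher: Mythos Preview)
Your plan is essentially the paper's own argument, repackaged. The paper also reduces each $h_{ij}$ to a pairing of a second derivative against $\psi_u\times\psi_v$, drops the $\psi$–correction from the Gauss formula, expands into determinants, and then invokes Lemma~\ref{lemma::fromAWDwToAwDw} to trade $PZ$ for $Z$ and produce the $\sec(v/r)$ factors. Your master identity $h_{ij}=\tfrac{1}{\sqrt g}\det(\psi,\psi_u,\psi_v,\psi_{ij})$ is simply a compact way of combining the paper's separate computations of $\nabla_{\psi_i}\psi_j$ and $\psi_u\times\psi_v$; the treatments of $h_{22}$ and $h_{12}$ are identical in substance.

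One caution about $h_{11}$. You are right that the $\alpha'$–correction in $Z''=D^2Z-\tfrac{1}{r^2}\langle\alpha',DZ\rangle\,\alpha-\tfrac{1}{r^2}\langle\alpha',Z\rangle'\,\alpha-\tfrac{1}{r^2}\langle\alpha',Z\rangle\,\alpha'$ survives in the determinant: since $\psi,\psi_v\in\mathrm{span}\{\alpha,Z\}$, the $\alpha$–pieces drop, but $\det(\psi,\psi_u,\psi_v,\alpha')$ does not vanish in general. Tracking it through yields an additional contribution
\[
-\,\frac{s_v^2}{c_v}\,\langle\alpha',Z\rangle\,\frac{(\alpha',DZ,Z)}{\sqrt g}
\]
which is absent from the displayed $h_{11}$. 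The paper's proof avoids this term only because it writes $Z'=DZ-\tfrac{1}{r^2}\langle\alpha',Z\rangle\,\psi$ in place of $Z'=DZ-\tfrac{1}{r^2}\langle\alpha',Z\rangle\,\alpha$ (the Gauss correction is along the position vector $\alpha$, not $\psi$), which then lets it absorb all corrections into $A\psi_u+B\psi$. So your more careful bookkeeping will \emph{not} assemble into the stated formula unless $\langle\alpha',Z\rangle=0$; be prepared for that discrepancy. It is harmless downstream: $h_{12}$, $h_{22}$, and hence $K_{ext}$ are unaffected, the Euclidean limit $r\to\infty$ kills the extra term, and the orthogonality $\langle\alpha',Z\rangle=0$ is assumed from Section~3.2 onward.
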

\begin{corollary}\label{Cor::KextRuledSurf}
Let $\Sigma^2\subset\mathbb{S}^3(r)$, {or $\Sigma^2\subset\mathbb{H}^3(r)$,} be a ruled surface with directrix $\alpha$ and rulings tangent to $Z$. Then, the extrinsic curvature of $\Sigma^2$ is given by
\begin{equation}\label{Kextsuperfdesenv}
    K_{ext}(u,v)=-\left[\frac{1}{c_v}\dfrac{(\alpha'(u),Z(u),\nabla_{\alpha'(u)}Z(u))}{g_{11}g_{22}-(g_{12})^2}\right]^2\leq0,
\end{equation}
{where $c_v=\cos(\frac{v}{r})$ in $\mathbb{S}^3(r)$ and $c_v=\cosh(\frac{v}{r})$ in $\mathbb{H}^3(r)$.}
\end{corollary}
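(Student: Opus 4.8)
The plan is to obtain $K_{ext}$ directly from Proposition \ref{Prop::2ndffRulSurf} via the standard determinant formula for the extrinsic curvature of a surface in a Riemannian $3$-manifold, namely $K_{ext}=\det(h_{ij})/\det(g_{ij})$, which follows from $K_{ext}=\kappa_1\kappa_2=\det S$ together with the matrix identity relating the shape operator to the coefficients $g_{ij}$ and $h_{ij}$. The crucial simplification is that Proposition \ref{Prop::2ndffRulSurf} gives $h_{22}=0$, reflecting the fact that the rulings are geodesics and hence carry no normal curvature. Consequently $\det(h_{ij})=h_{11}h_{22}-h_{12}^2=-h_{12}^2$, so the lengthy expression for $h_{11}$ never enters the computation.

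First I would write $K_{ext}=-h_{12}^2/\det(g_{ij})$. Substituting $h_{12}=\frac{1}{\sqrt{g}}\sec(\frac{v}{r})(\alpha',Z,DZ)$, with $g=\det g_{ij}=g_{11}g_{22}-(g_{12})^2$, yields $h_{12}^2=\frac{1}{g}\sec^2(\frac{v}{r})(\alpha',Z,DZ)^2$, whence
\[
K_{ext}=-\frac{1}{g}\cdot\frac{\sec^2\left(\frac{v}{r}\right)(\alpha',Z,DZ)^2}{g}=-\left[\sec\left(\frac{v}{r}\right)\frac{(\alpha',Z,DZ)}{g_{11}g_{22}-(g_{12})^2}\right]^2,
\]
where I use $g=g_{11}g_{22}-(g_{12})^2$ once more to rewrite the denominator. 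Since the right-hand side is manifestly the negative of a square, and $\sec(\frac{v}{r})$ is real wherever the parametrization is regular (i.e. $c_v\neq0$), we conclude $K_{ext}\leq0$, recovering the classical Euclidean fact that ruled surfaces have non-positive extrinsic Gaussian curvature.

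There is essentially no computational obstacle here, as the content is carried entirely by Proposition \ref{Prop::2ndffRulSurf} and in particular by the vanishing of $h_{22}$. The only point worth flagging is the consistency of sign conventions: the formula $K_{ext}=\det(h_{ij})/\det(g_{ij})$ must match the definition $S_p(X)=-(\nabla_X\xi)(p)$ and the choice of unit normal $\xi=\frac{1}{\sqrt{g}}\psi_u\times\psi_v$ used to define the $h_{ij}$. Since $K_{ext}=\kappa_1\kappa_2$ is a product of eigenvalues, it is independent of the orientation of $\xi$, so no sign ambiguity survives and the non-positivity is unconditional.
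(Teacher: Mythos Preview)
Your proof is correct and is exactly the intended derivation: the paper states this result as an immediate corollary of Proposition~\ref{Prop::2ndffRulSurf} without a separate proof, and your computation $K_{ext}=\det(h_{ij})/\det(g_{ij})=-h_{12}^2/g$ together with the explicit form of $h_{12}$ is precisely how one is meant to read it off.
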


\begin{proof}[of Prop. \ref{Prop::2ndffRulSurf}]
Let us first compute the unit normal $\xi$ of $\Sigma^2$. We have
\begin{eqnarray}
\psi_u\times\psi_v
&=&c_v\alpha'\times PZ\times\psi+rs_vDZ\times PZ\times\psi-\dfrac{\langle PZ,\alpha'\rangle}{r}s_v\psi\times PZ\times\psi\nonumber\\
&=&c_v\alpha'\times PZ\times\psi+rs_vDZ\times PZ\times\psi.
\end{eqnarray}
Note that the last expression is not necessarily seen as a linear combination of vectors in $\mathbb{S}^3(r)$, but each term in the sum makes sense in the ambient $\mathbb{E}^4$.

\textsc{Coefficient} $h_{22}$: Since $\psi_v$ is the tangent vector field along the geodesic $\gamma(v)=\psi(\cdot ,v)$, it follows that $\nabla_{\psi_v}{\psi_v}=0$ and, therefore, $h_{22}=0$.

\textsc{Coefficient} $h_{12}$: Using the Gauss formula, Eq. (\ref{eq::CovDerAndUsualDer}), we compute $\nabla_{\psi_v}{\psi_u}$ as
\begin{equation}
\nabla_{\psi_v}{\psi_u}
=
\nabla^0_{\psi_v}{\psi_u} +\dfrac{1}{r^2}\langle \psi_u ,\psi_v\rangle\psi(u,v)=  \dfrac{\partial\psi_u}{\partial v}+\dfrac{g_{12}}{r^2}\psi(u,v).
\end{equation}
Now, since $\psi_u=c_v\alpha'+rs_vZ'$, it follows
$$\dfrac{\partial\psi_u}{\partial v}=-\dfrac{1}{r}\sin{\left(\dfrac{v}{r}\right)}\alpha'+\cos{\left(\dfrac{v}{r}\right)}Z'$$
and, therefore, 
\begin{equation}\label{curvaturaextrin}
    \nabla_{\psi_v}{\psi_u}=-\dfrac{1}{r}\sin{\left(\dfrac{v}{r}\right)}\alpha'+\cos{\left(\dfrac{v}{r}\right)}Z'+\dfrac{g_{12}}{r^2}\psi(u,v).
    \end{equation}
    
Using the Gauss formula again, the derivative of $Z$ in $\mathbb{E}^4$ is written as $$Z'=DZ-\dfrac{1}{r^2}\langle\alpha',Z\rangle\psi(u,v),$$ and, therefore, we may rewrite $\nabla_{\psi_v}{\psi_u}$ in Eq. (\ref{curvaturaextrin}) as
\begin{equation}
    \nabla_{\psi_v}\psi_u=-\dfrac{1}{r}s_v\alpha'+c_vDZ+\dfrac{g_{12}-\langle\alpha',Z\rangle c_v}{r^2}\,\psi(u,v).
\end{equation}
Finally, the coefficient $h_{12}=\langle\nabla_{\psi_v}\psi_u,\frac{1}{\sqrt{g}}\psi_u\times\psi_v\rangle$ is
\begin{eqnarray*}
         h_{12}&=& \left\langle-\dfrac{1}{r}s_v\alpha'+c_vDZ, \dfrac{\psi_u\times\psi_v}{\sqrt{g}} \right\rangle \nonumber\\
         & = & \dfrac{1}{\sqrt{g}}\Big\{-\dfrac{1}{r}s_v\left[c_v(\alpha',\alpha',PZ)+rs_v(\alpha',DZ,PZ)\right]+\nonumber\\
         & + &  c_v\left[c_v(DZ,\alpha',PZ)+rs_v(DZ,DZ,PZ)\right]\Big\}\nonumber\\
         & = & \dfrac{1}{\sqrt{g}}\Big[c_v^2(DZ,\alpha',PZ)-s_v^2(\alpha',DZ,PZ)\Big]= \sec\left(\frac{v}{r}\right)\dfrac{(\alpha',Z,DZ)(u)}{\sqrt{g}},
\end{eqnarray*}
where in the last equality we used lemma \ref{lemma::fromAWDwToAwDw} with $X=\alpha'$ and $Y=DZ$.

\textsc{Coefficient} $h_{11}$: First, we have $$\nabla_{\psi_u}\psi_u=\nabla_{\psi_u}^0\psi_u+\dfrac{1}{r^2}\langle\psi_u,\psi_u\rangle\psi=c_v\alpha''+rs_vZ''+\dfrac{g_{11}}{r^2}\psi.$$
Using that $Z'=\nabla_{\alpha'}^0Z=DZ-\dfrac{1}{r^2}\langle\alpha',Z\rangle\psi$ gives
$$
\begin{array}{ccl}
     Z''&=&\nabla_{\alpha'}^0(DZ-\dfrac{1}{r^2}\langle\alpha',Z\rangle\psi)\\
     &=&\left(D^2Z-\dfrac{1}{r^2}\langle\alpha',DZ\rangle\psi\right)-\dfrac{1}{r^2}\left(\langle\alpha',Z\rangle'\psi+\langle\alpha',Z\rangle\nabla_{\alpha'}^0\psi\right)
\end{array}
$$
and, consequently,
\begin{eqnarray}
    \nabla_{\psi_u}\psi_u &=& c_v\alpha''+rs_vD^2Z-rs_v\left[\dfrac{\langle\alpha',DZ\rangle+\langle\alpha',Z\rangle'}{r^2}\psi+\dfrac{\langle\alpha',Z\rangle\nabla_{\alpha'}^0\psi}{r^2}\right]+\dfrac{g_{11}}{r^2}\psi\nonumber\\
    & = & c_vD\alpha'+rs_vD^2Z+A\nabla_{\alpha'}^0\psi+B\psi,\nonumber
\end{eqnarray}
where the coefficients $A$ and $B$ are not important to compute $h_{11}$, since the inner {products} of $\nabla_{\alpha'}^0\psi=\psi_u$ and $\psi$ with $\psi_u\times\psi_v$ vanish. Then, $h_{11}$ is 
\begin{eqnarray}
    h_{11} & = & \left\langle\nabla_{\psi_u}\psi_u,\dfrac{\psi_u\times\psi_v}{\sqrt{g}}\right\rangle\nonumber\\
    & = & \frac{1}{\sqrt{g}}\langle c_vD\alpha'+rs_vD^2Z,c_v\alpha'\times PZ\times\psi+rs_vDZ\times PZ\times\psi\rangle\nonumber\\
    &=&\frac{1}{\sqrt{g}}\Big[c^2_v\det(D\alpha',\alpha',PZ,\psi)+rs_vc_v\det(D\alpha',DZ,PZ,\psi)+\nonumber\\
     &+&rs_vc_v\det(D^2Z,\alpha',PZ,\psi)
     +r^2s^2_v\det(D^2Z,DZ,PZ,\psi)\Big].
\end{eqnarray}
Finally, applying lemma \ref{lemma::fromAWDwToAwDw} in the last equality above, we  get
\[
    h_{11} = \frac{c_v(D\alpha',\alpha',Z)+rs_v[(D\alpha',DZ,Z)
     +(D^2Z,\alpha',Z)]}{\sqrt{g}}+\frac{r^2s^2_v(D^2Z,DZ,Z)}{c_v\sqrt{g}}.
\]
\qed \end{proof}
\begin{remark}
If $r\gg1$, then {$\cos(\frac{v}{r}),\cosh(\frac{v}{r})\approx1$} and {$\sin(\frac{v}{r}),\sinh(\frac{v}{r})\approx\frac{v}{r}$}. Thus, the coefficients of the second fundamental form behave as $h_{22}=0$, $h_{12}\approx \frac{1}{\sqrt{g}}(\alpha',Z,DZ)$, and 
$$h_{11}\approx\frac{(D\alpha',\alpha',Z)+v[(D\alpha',DZ,Z)+(D^2Z,\alpha',Z)]+v^2(D^2Z,DZ,Z)}{\sqrt{g}}.$$
Therefore, we recover the known results from Euclidean space \cite{Barbosa1986,DoCarmo2016DG}.
\end{remark}

\subsection{Ruled minimal surfaces}

\begin{figure}[t]
    \centering
    \includegraphics[width=0.75\linewidth]{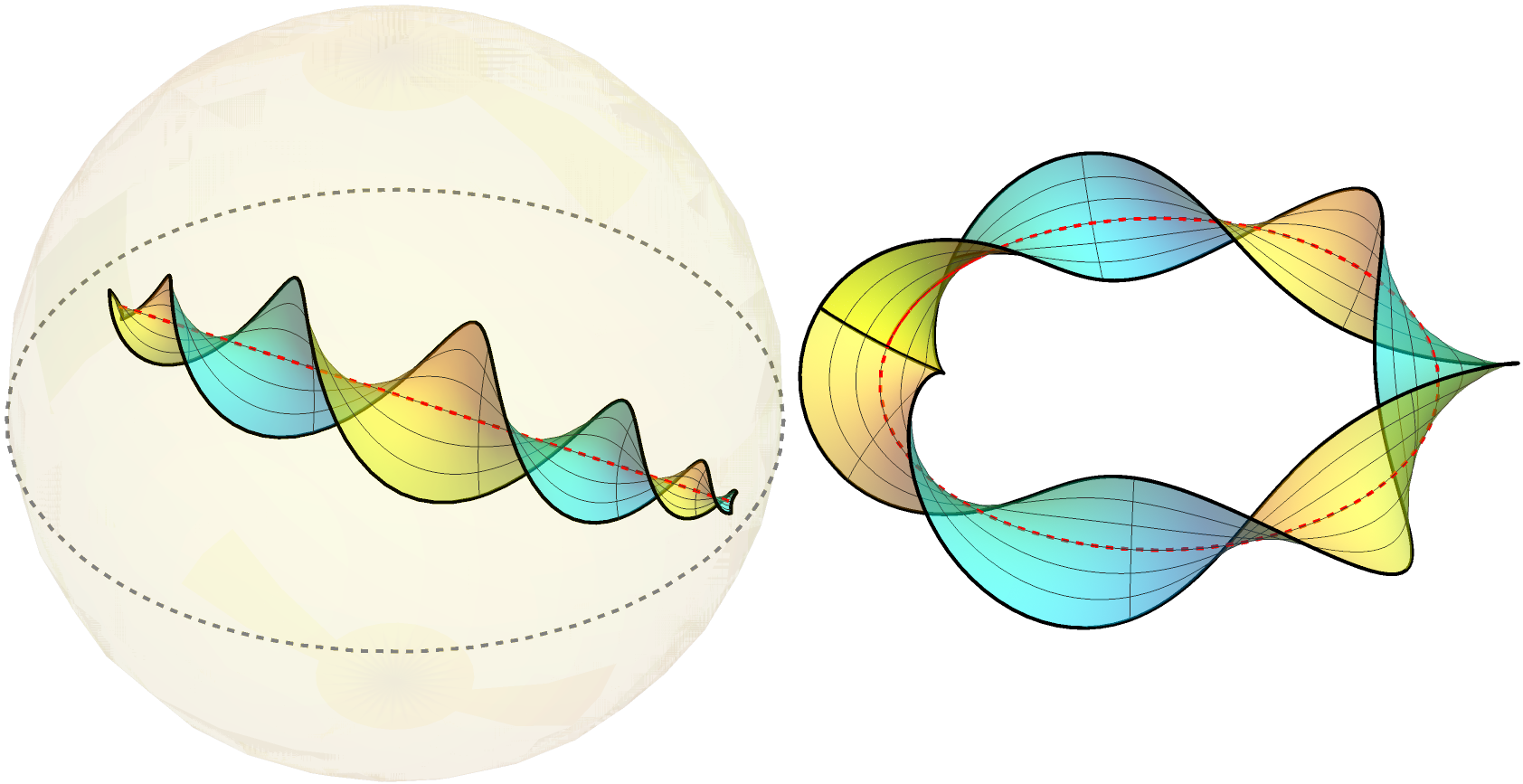}
    \caption{Helicoids in space forms: (Left) Helicoid in the Poincar\'e ball model of $\mathbb{H}^3(1)$ obtained from Eq. \eqref{eq::H3helicoids} through stereographic projection $(x_1,x_2,x_3,x_4)\mapsto (\frac{x_2}{1+x_1},\frac{x_3}{1+x_1},\frac{x_4}{1+x_1})$. The dashed gray line indicates the points $(x,y,z)$ in the ideal boundary with $z=0$; (Right) Helicoid in $\mathbb{S}^3(1)$ obtained from Eq. \eqref{eq::S3helicoids} through stereographic projection $(x_1,x_2,x_3,x_4)\mapsto (\frac{x_1}{1-x_4},\frac{x_2}{1-x_4},\frac{x_3}{1-x_4})$. (In the figures, the dashed red lines indicate the generating curves, we set $\omega=3.0$, and both surfaces have constant width, i.e., $\vert v\vert\leq\mbox{const.}$. Note that in $\mathbb{S}^3$ helices are periodic if and only if $\omega\in\mathbb{Q}$ \cite{ChakrabartiInvolve2019}.) {Figures generated with Mathematica.}}
    \label{fig:Helicoids}
\end{figure}

Now, we are in condition to compute the mean curvature of ruled surfaces and then proceed to characterize the helicoids as the only ruled minimal surfaces in Riemannian space forms. First, note that choosing $Z$ and $\alpha$ to be orthogonal, gives $g_{12}=0$ and, consequently, to find the mean curvature it is enough to compute $h_{11}=\langle\nabla_{\psi_u}\psi_u ,\xi\rangle$. Indeed,
\begin{equation}\label{eq::MeanCurvOfRuledSurface}
H=\frac{1}{2}\frac{h_{11}g_{22}-2h_{12}g_{12}+h_{22}g_{11}}{g_{11}g_{22}-g_{12}^2}=\frac{h_{11}}{2g_{11}}.
\end{equation}
In addition, we assume that the directrix curve $\alpha$ is parametrized by arc-length.   

Given a geodesic $\gamma$, we may consider the 1-parameter subgroup of isometries $\lambda_t$ obtained by the composition of the translation along $\gamma$ and the 1-parameter subgroup whose orbits are circles centered on points of $\gamma$. Up to isometries, in the hyperbolic space $\mathbb{H}^3(r)$ we may write for some constant $\omega\in\mathbb{R}$ \cite{RipollNMJ1989}
\begin{equation}
    u\mapsto \lambda_u=\left(
    \begin{array}{cccc}
        \cosh \left(\frac{u}{r}\right) & \sinh \left(\frac{u}{r}\right) & 0 & 0  \\ [4pt]
        \sinh \left(\frac{u}{r}\right) & \cosh \left(\frac{u}{r}\right) & 0 & 0  \\ [4pt]
        0 & 0 & \cos(\omega u) & -\sin(\omega u) \\
        0 & 0 & \sin(\omega u) &  \cos(\omega u)  \\
    \end{array}
    \right),
\end{equation}
while in the 3-sphere $\mathbb{S}^3(r)$ we may write
\begin{equation}
    u\mapsto \lambda_u=\left(
    \begin{array}{cccc}
        \cos \left(\frac{u}{r}\right) & -\sin \left(\frac{u}{r}\right) & 0 & 0  \\[4pt]
        \sin \left(\frac{u}{r}\right) & \cos \left(\frac{u}{r}\right) & 0 & 0 \\ [4pt]
        0 & 0 & \cos(\omega u) & -\sin(\omega u) \\
        0 & 0 & \sin(\omega u) &  \cos(\omega u)  \\
    \end{array}
    \right).
\end{equation}
We call $\lambda_u$ a \emph{helicoidal motion} with \emph{angular pitch} $\omega$ and \emph{axis} $\gamma$. A \emph{helicoid} is the surface obtained by applying a helicoidal motion to a geodesic $\beta$ in the totally geodesic surface orthogonal to the axis $\gamma$. If the axis is $\gamma:\pm x_1^2+x_2^2=\pm r^2$, as above, then we can take $\beta(v)=r(\cosh\frac{v}{r},0,\sinh\frac{v}{r},0)$ in $\mathbb{H}^3(r)$ and $\beta(v)=r(\cos\frac{v}{r},0,\sin\frac{v}{r},0)$ in $\mathbb{S}^3(r)$, which  implies that up to isometries any helicoid can be written in $\mathbb{H}^3(r)$ as [see Figure \ref{fig:Helicoids}, left]
\begin{eqnarray}
    \lambda_u(\beta(v)) &=& r\left(\cosh\frac{v}{r}\cosh\frac{u}{r},\cosh\frac{v}{r}\sinh\frac{u}{r},\sinh\frac{v}{r}\cos \omega u,\sinh\frac{v}{r}\sin \omega u\right)\nonumber\\
    &=& \cosh\frac{v}{r}\left(r\cosh\frac{u}{r},r\sinh\frac{u}{r},0,0\right)+r\sinh\frac{v}{r}\left(0,0,\cos \omega u,\sin \omega u\right)\nonumber\\
    \label{eq::H3helicoids}
\end{eqnarray}
and in $\mathbb{S}^3(r)$ as [see Figure \ref{fig:Helicoids}, right]
\begin{eqnarray}
\lambda_u(\beta(v)) &=& r\left(\cos\frac{v}{r}\cos\frac{u}{r},\cos\frac{v}{r}\sin\frac{u}{r},\sin\frac{v}{r}\cos \omega u,\sin\frac{v}{r}\sin \omega u\right)\nonumber\\
&=& \cos\frac{v}{r}\left(r\cos\frac{u}{r},r\sin\frac{u}{r},0,0\right)+r\sin\frac{v}{r}\left(0,0,\cos \omega u,\sin \omega u\right).\label{eq::S3helicoids}
\end{eqnarray}

We clearly see that these helicoids are ruled surfaces with generating curves $\alpha(u)=r(\cosh(u/r),\sinh(u/r),0,0)$ and $\alpha(u)=r(\cos(u/r),\sin(u/r),0,0)$ and rulings tangent to the vector field $Z(u)=(0,0,\cos(\omega u),\sin(\omega u))$. Moreover, as in Euclidean space, helicoids in space forms are also minimal surfaces. Indeed, we have that
$$DZ=Z'-\langle\alpha',Z\rangle\frac{\alpha}{r^2}=Z'=\omega(0,0,-\sin(\omega u),\cos(\omega u))$$ 
and $D^2Z=Z''=-\omega^2Z$. (We will do the calculations in $\mathbb{H}^3(r)$, the computations in $\mathbb{S}^3(r)$ being analogous.) Then, it follows that $(D^2Z,\alpha',Z)=0$ and $(D^2Z,DZ,Z)=0$. Since $\alpha$ is a geodesic, we also have $D\alpha'=0$. Putting all this together implies $h_{11}=0$ and since $\langle\alpha',Z\rangle=0$, we conclude that $H=0$. 

In the next theorem, we shall prove that every ruled minimal surface in a space form must be a helicoid \cite{doCarmoTAMS1983,LawsonAM1970}. For our proof, we follow similar steps as those in Theorem 3.1 of Ref. \cite{Barbosa1986} in Euclidean space. In order to do that, it should be first mentioned that as in Euclidean space, also in space forms a point moving under a helicoidal motion generate \emph{helices}, i.e., curves of constant curvature and torsion. Indeed, for example, the curve $u\mapsto \gamma_v(u)\equiv \lambda_u(\beta(v))$ in $\mathbb{H}^3(r)$ {has} curvature and torsion given by
\begin{equation}
    \kappa(u) = \frac{\sqrt{\langle \gamma_v'\times\gamma_v'',\gamma_v'\times\gamma_v''\rangle}}{\langle \gamma_v',\gamma_v'\rangle^{\frac{3}{2}}}=\frac{(1+r^2\omega^2)\sinh(2v/r)}{1-r^2\omega^2+(1+r^2\omega^2)\cosh(2v/r)}
\end{equation}
and
\begin{equation}
    \tau(u) = \frac{\langle \gamma_v'\times\gamma_v'',\gamma_v'''\rangle}{\langle \gamma_v'\times\gamma_v'',\gamma_v'\times\gamma_v''\rangle}=\frac{2\omega}{r[1-r^2\omega^2+(1+r^2\omega^2)\cosh(2v/r)]}.
\end{equation}
{Since we are modeling $\mathbb{H}^3(r)$ as a hypersphere in Lorentzian space, $\gamma_v'$ and $\gamma_v''$ in the expressions for $\kappa$ and $\tau$ can be calculated via ordinary differentiation instead of the covariant derivative in $\mathbb{H}^3(r)$.} Conversely, it can be shown that up to isometries of $\mathbb{H}^3(r)$ or $\mathbb{S}^3(r)$, every helix can be written as in Eqs. \eqref{eq::H3helicoids} or \eqref{eq::S3helicoids} for some $v$ and $\omega$ depending on the values of $\kappa$ and $\tau$. (See, for example, Theorem 1 of \cite{ChakrabartiInvolve2019}, whose proof can be easily adapted to helices in $\mathbb{H}^3(r)$.) 

\begin{theorem}\label{Thr::CharRuledMinSurf}
Let $\Sigma^2$ be a minimal ruled surface in $\mathbb{H}^3(r)$, or in $\mathbb{S}^3(r)$, then $\Sigma^2$ is a helicoid, i.e., up to isometries, $\Sigma^2$ is parametrized as in Eq. $(\ref{eq::H3helicoids})$, or in Eq. $(\ref{eq::S3helicoids})$, respectively.
\end{theorem}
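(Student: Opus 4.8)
The plan is to convert minimality into a single scalar equation along the directrix and then extract a system of ordinary differential equations for a Frenet-type frame. Throughout I take, as assumed in the text, the directrix $\alpha$ to be parametrized by arc length and orthogonal to the rulings, so that $g_{12}=\langle\alpha',Z\rangle=0$ and $g_{22}=1$; by Eq.~\eqref{eq::MeanCurvOfRuledSurface} this makes $\Sigma^2$ minimal if and only if $h_{11}=0$. Substituting the expression for $h_{11}$ from Proposition~\ref{Prop::2ndffRulSurf}, cancelling the nowhere-zero factor $\sqrt{g}$ and multiplying by $c_v$, minimality at every point becomes
\[
c_v^2\,(D\alpha',\alpha',Z)+r\,s_vc_v\,\big[(D\alpha',DZ,Z)+(D^2Z,\alpha',Z)\big]+r^2s_v^2\,(D^2Z,DZ,Z)=0 .
\]
The three functions $c_v^2,\ s_vc_v,\ s_v^2$ are linearly independent on any $v$-interval, while the mixed products depend on $u$ only; hence each coefficient must vanish identically along $\alpha$, yielding the three scalar conditions $(D\alpha',\alpha',Z)=0$, $(D\alpha',DZ,Z)+(D^2Z,\alpha',Z)=0$, and $(D^2Z,DZ,Z)=0$.

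Next I would introduce the orthonormal frame $\{T=\alpha',\,Z,\,N=\alpha'\times Z\}$ (with $\times_1$ in the hyperbolic case) and write its structure equations $DT=aZ+bN$, $DZ=-aT+cN$, $DN=-bT-cZ$, where $a=\langle DT,Z\rangle$, $b=\langle DT,N\rangle$, $c=\langle DZ,N\rangle$ and $D=\nabla_{\alpha'}$. Since the mixed product of three tangent vectors equals, up to a fixed nonzero factor, the determinant of their components in this frame, the three conditions translate into $b=0$, $c'=0$, and $a'c-ac'=0$. Therefore $b\equiv0$ and $c$ is a constant, and the non-cylinder hypothesis $DZ\neq0$ gives $a^2+c^2=\lvert DZ\rvert^2>0$ everywhere.

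It remains to integrate these relations and recognize the surface. When $c\neq0$, the equation $a'c=0$ forces $a$ to be constant, so $\alpha$ has constant curvature $\lvert a\rvert$ and constant torsion and $Z$ is, up to sign, its principal normal (the subcase $a=0$ being the familiar geodesic directrix along which $Z$ rotates at a constant rate). Thus $\alpha$ is a helix, and by the classification of helices in space forms recalled before the theorem, up to an ambient isometry $\alpha$ coincides with one of the curves $u\mapsto\lambda_u(\beta(v_0))$ for suitable $\omega$ and $v_0$; checking that the ruling field $Z$ agrees up to sign with the tangent of the helicoid's generating geodesic then identifies $\Sigma^2$ with the helicoid of Eq.~\eqref{eq::S3helicoids} (resp.\ Eq.~\eqref{eq::H3helicoids}). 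When $c=0$, the relation $b=0$ gives $DN=0$, so $N$ is parallel and $\alpha$ together with all its rulings lies in the totally geodesic surface orthogonal to $N$; hence $\Sigma^2$ is an open piece of a totally geodesic sphere (resp.\ hyperbolic plane), which is the helicoid with $\omega=0$. The hyperbolic case is handled verbatim, replacing $\times$, $\cos$, $\sin$ by $\times_1$, $\cosh$, $\sinh$.

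I expect the main obstacle to be this final identification: passing from the intrinsic description of the directrix (a helix ruled by its principal normal) to the explicit closed form of the helicoid. This relies on the cited helix-classification result and, crucially, on verifying that the principal normal of $u\mapsto\lambda_u(\beta(v_0))$ points along the helicoid's ruling geodesic, so that the two ruled surfaces actually coincide; the orientation bookkeeping hidden in $N=\alpha'\times Z$ and the treatment of the degenerate cases $a=0$ and $c=0$ also demand some care.
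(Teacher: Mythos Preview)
Your approach is correct and structurally the same as the paper's: both derive the three vanishing conditions from $h_{11}\equiv0$ via the linear independence of $c_v^2,\,s_vc_v,\,s_v^2$, conclude that $\alpha$ is a helix with $Z$ its principal normal (or else the surface is totally geodesic), and then invoke the helix classification together with an explicit check that the principal normal of $u\mapsto\lambda_u(\beta(v_0))$ points along the helicoid's ruling.

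The only real difference is bookkeeping. The paper works directly with the mixed products and does its case split on whether $\nabla_{\alpha'}\nabla_{\alpha'}Z$ is parallel to $Z$, deducing $\kappa'=\tau'=0$ by hand; your orthonormal frame $\{T,Z,N\}$ with coefficients $a,b,c$ packages the same information more transparently, reducing the three conditions to $b=0$, $c'=0$, $a'c=0$ and the case analysis to the clean dichotomy $c=0$ versus $c\neq0$. Your $c=0$ branch (where $DN=0$ forces the surface into a totally geodesic leaf) corresponds exactly to the paper's cases where $\alpha'\parallel\nabla_{\alpha'}Z$, and your $c\neq0$ branch to its ``$\nabla_{\alpha'}\nabla_{\alpha'}Z\parallel Z$'' case. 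The final identification step you flag as the main obstacle is precisely where the paper does its explicit computation, writing $\alpha$ in the form of Eq.~\eqref{eq::H3helicoids} and verifying $Z=\pm\mathbf{N}$; you would need to carry that out, including the $a=0$ subcase where the Frenet normal is undefined but $\alpha$ is the helicoid's axis.
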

\begin{proof}
From Eq. \eqref{eq::MeanCurvOfRuledSurface}, it follows that $H=0$ if, and only if, $h_{11}=0$. Equivalently, $\Sigma^2$ is a minimal ruled surface if, and only if, 
\begin{eqnarray}
    (D\alpha',\alpha',Z)&= & 0, \label{eqhel1} \\
     (D\alpha',DZ,Z)   +(D^2Z,\alpha',Z)&=&0,\label{eqhel2} \\
     (D^2Z,DZ,Z)&=&0,\label{eqhel3}
\end{eqnarray}
{where we have adopted the shorthand notation $D=\nabla_{\alpha'}$ and $D^2=\nabla_{\alpha'}\nabla_{\alpha'}$.}

From Eq. (\ref{eqhel1}), we have $D\alpha'\in \mbox{\mbox{\mbox{span}}}\{\alpha',Z\}$. Since $\Vert\alpha'\Vert=1$, what gives $\langle D\alpha',\alpha'\rangle=0$, then $D\alpha'$ and $Z$ are parallel. Thus, Eq. (\ref{eqhel2}) gives
$$
(D^2Z,\alpha',Z)=0.
$$ 
This relation, together with Eq. (\ref{eqhel3}), implies that $D^2Z\in \mbox{span}\{\alpha',Z\}$ and $D^2Z\in \mbox{span}\{DZ,Z\}$. On the one hand, if $D^2Z$ and $Z$ are not parallel, we necessarily have
$$
\mbox{span}\{\alpha',Z\}=\mbox{span}\{DZ,Z\}.
$$
In addition, since $\langle DZ,Z\rangle=0$ and $\langle\alpha',Z\rangle=0$, then $\alpha'$ must be parallel to $DZ$. Thus, using that $D\alpha'=\mu Z$ for some $\mu$, as inferred above, the torsion $\tau$ of $\alpha$ is
\begin{equation}
    \tau=\frac{(\alpha',D\alpha',D^2\alpha')}{\Vert D\alpha'\Vert^2}=(\alpha', Z,DZ)=0.
\end{equation}
Now, from Corollary \ref{Cor::KextRuledSurf}, the condition $\tau=0$ implies that, in addition to $H=0$, $\Sigma^2$ is also extrinsically flat. Consequently, it is part of a totally geodesic surface, which is a helicoid with zero angular pitch.

On the other hand, let us assume that $D^2Z$ and $Z$ are parallel. If $DZ$ and $\alpha'$ are also parallel, then in addition to $H=0$ it follows that $K=0$ and, therefore, $\Sigma^2$ would be a totally geodesic surface. Thus, we may assume that $DZ$ and $\alpha'$ are not parallel. Using that $D\alpha'$ and $Z$ are parallel, $D\alpha'\parallel Z$, the curvature function $\kappa$ of $\alpha$ is $\kappa=\pm\langle D\alpha',Z\rangle=\mp\langle\alpha',DZ\rangle$. Therefore, $Z\parallel D\alpha'$ together with $Z\parallel D^2Z$ finally give 
\begin{equation}
 \pm\,\kappa'=\langle D\alpha',DZ\rangle+\langle\alpha',D^2Z\rangle=0.
\end{equation}
Analogously, the derivative of the torsion of $\alpha$ is
\begin{equation}
    \pm\,\tau'=\langle D\alpha'\times Z,DZ\rangle+\langle\alpha'\times DZ,DZ\rangle+\langle\alpha'\times Z,D^2Z\rangle=0.
\end{equation}
In short, $\kappa$ are $\tau$ are constant, what tells us that $\alpha$ is a helix and, therefore, there exists a value of $v$ such that we can write $\alpha$ as (the proof for $\mathbb{S}^3(r)$ is entirely analogous)
\begin{equation}
    \alpha(u) = r(\cosh\frac{v}{r}\cosh\frac{u}{r},\cosh\frac{v}{r}\sinh\frac{u}{r},\sinh\frac{v}{r}\cos(\omega u),\sinh\frac{v}{r}\sin(\omega u)).
\end{equation}
The unit tangent is $\mathbf{T}=\Omega\,\alpha'$, where $\Omega^{-1}=\sqrt{\cosh^2(v/r)+r^2\omega^2\sinh^2(v/r)}$ does not depend on $u$. Since   $\kappa\mathbf{N}=\nabla_{\mathbf{T}}\mathbf{T}=\Omega^2D\alpha'$, where $\mathbf{N}$ is the principal normal, we conclude that $\mathbf{N}$ is a multiple of $D\alpha'=\alpha''-\langle\alpha',\alpha'\rangle\frac{\alpha}{r^2}=\alpha''-\frac{\alpha}{r^2\Omega^2}$. In addition, using that $Z\parallel D\alpha'$, we conclude that we may take $Z=\pm\mathbf{N}$. Thus 
\begin{equation}
    Z=-(\sinh\frac{v}{r}\cosh\frac{u}{r},\sinh\frac{v}{r}\sinh\frac{u}{r},\cosh\frac{v}{r}\cos(\omega u),\cosh\frac{v}{r}\sin(\omega u))
\end{equation}
and we can parametrize $\Sigma^2$ as $\psi(u,w) = \cosh\frac{w}{r}\,\alpha(u)+r\sinh\frac{w}{r}Z(u)$, which gives
\[
     \psi
     = r(\mbox{ch}\frac{v-w}{r}\mbox{ch}\frac{u}{r},\mbox{ch}\frac{v-w}{r}\mbox{sh}\frac{u}{r},\mbox{sh}\frac{v-w}{r}\cos\omega u,\mbox{sh}\frac{v-w}{r}\sin\omega u),
\]
where $\mbox{ch}=\cosh$ and $\mbox{sh}=\sinh$. Finally, comparison with Eq. \eqref{eq::H3helicoids} shows that $\psi(u,w)=\lambda_u(\beta(v-w))$, from which we conclude that $\Sigma^2$ is a helicoid.
\qed \end{proof}

\subsection{Singular points of ruled surfaces}

When we allow ruled surfaces in $\mathbb{E}^3$ to have singular points, the notion of striction curve plays an important role, since the singular points (if they exist) have to be contained in the image of the striction curve \cite{DoCarmo2016DG}. (As the example {of a cone} shows, the striction curve may degenerate to a single point.) The next proposition establishes the existence of striction curves for ruled surfaces in $\mathbb{S}^3(r)$ and $\mathbb{H}^3(r)$. Later, assuming that the directrix curve is the striction curve leads to a simpler description of the behavior of the Gaussian curvature, which will prove useful in classifying extrinsically flat surfaces.

\begin{proposition}\label{Prop::ExistStrictionCurve}
Let $\Sigma^2:(u,v)\mapsto\exp_{\alpha(u)}(vZ(u))$ be a ruled surface in $\mathbb{S}^3(r)$, or $\mathbb{H}^3(r)$, then there exists a curve $\beta:I\to\Sigma^2$, called the \emph{striction curve}, such that
\begin{equation}
    \langle\beta',\nabla_{\beta'}PZ\rangle = 0,
\end{equation}
where $PZ$ is a unit vector field along $\beta$ obtained by parallel transporting $Z$ along the geodesics connecting $\alpha$ to $\beta$. Up to antipodal points, the striction curve is unique. In addition, the (oriented) distance, $v(u)$, from $\beta$ to $\alpha$ is given in $\mathbb{S}^3(r)$ by
\begin{equation}\label{estricao2}
v(u)=\frac{r}{2}\arctan\left(-\frac{2}{r}\dfrac{\langle\alpha',\nabla_{\alpha'}Z\rangle}{\langle \nabla_{\alpha'}Z,\nabla_{\alpha'}Z\rangle-\frac{g(u,0)}{r^2}}\right),
\end{equation}
where $g=\det g_{ij}$, and in $\mathbb{H}^3(r)$ by
\begin{equation}
    v(u)
    =
    \frac{r}{2}\,\mathrm{arctanh}\left(-\frac{2}{r}\dfrac{\langle\alpha',\nabla_{\alpha'}Z\rangle}{\langle \nabla_{\alpha'}Z,\nabla_{\alpha'}Z\rangle 
    +
    \frac{g(u,0)}{r^2}}\right).
\end{equation}
\end{proposition}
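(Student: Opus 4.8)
The plan is to look for the striction curve in the form $\beta(u)=\psi(u,v(u))=\exp_{\alpha(u)}(v(u)Z(u))$ for an unknown function $v(u)$, and to reduce the defining relation $\langle\beta',\nabla_{\beta'}PZ\rangle=0$ to a single pointwise equation for $v(u)$ that can be solved in closed form. The first observation is that along the surface the parallel transport $PZ$ of $Z$ is exactly $\psi_v$, by Eq.~\eqref{eq::PZ}. Differentiating $\beta$ gives $\beta'=\psi_u+v'\psi_v$, and because the rulings are geodesics we have $\nabla_{\psi_v}\psi_v=0$; hence
\[
\nabla_{\beta'}PZ=\nabla_{\psi_u+v'\psi_v}\psi_v=\nabla_{\psi_u}\psi_v .
\]

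The key simplification is to expand $\langle\beta',\nabla_{\beta'}PZ\rangle=\langle\psi_u,\nabla_{\psi_u}\psi_v\rangle+v'\langle\psi_v,\nabla_{\psi_u}\psi_v\rangle$ and to recognize both terms as derivatives of metric coefficients. Using the symmetry $\nabla_{\psi_u}\psi_v=\nabla_{\psi_v}\psi_u$, one has $\langle\psi_u,\nabla_{\psi_u}\psi_v\rangle=\tfrac{1}{2}\partial_v g_{11}$ and $\langle\psi_v,\nabla_{\psi_u}\psi_v\rangle=\tfrac{1}{2}\partial_u g_{22}$. Since $g_{22}\equiv1$ the whole $v'$-term vanishes, and the striction condition collapses to
\[
\langle\beta',\nabla_{\beta'}PZ\rangle=\tfrac{1}{2}\,\partial_v g_{11}=0 ,
\]
so that $\beta$ is precisely the locus of critical points of the squared spacing $g_{11}$ of nearby rulings, just as in the Euclidean picture.

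The remaining step is to substitute the explicit first fundamental form. Rewriting $g_{11}$ by half-angle identities casts it as $C_0+C_1\cos(2v/r)+C_2\sin(2v/r)$, with $C_0,C_1,C_2$ built from $\langle\alpha',\alpha'\rangle$, $\langle\alpha',\nabla_{\alpha'}Z\rangle$, $\langle\nabla_{\alpha'}Z,\nabla_{\alpha'}Z\rangle$ and $\langle\alpha',Z\rangle$; then $\partial_v g_{11}=0$ reads $\tan(2v/r)=C_2/C_1$, which inverts to the stated formula once the denominator is identified with $\langle\nabla_{\alpha'}Z,\nabla_{\alpha'}Z\rangle-g(u,0)/r^2$ via $g(u,0)=\langle\alpha',\alpha'\rangle-\langle\alpha',Z\rangle^2$. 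Existence of $\beta$ is then immediate since $\partial_v g_{11}=0$ can be solved for $v$ at each $u$ (the coefficients $C_1,C_2$ not vanishing simultaneously away from degenerate rulings), and uniqueness up to antipodal points follows from periodicity: along each closed geodesic ruling $g_{11}$ has period $\pi r$, so its minima recur exactly at antipodal parameters $v$ and $v+\pi r$, the principal branch of the arctangent selecting one of them. The hyperbolic case runs identically after replacing $\cos,\sin$ by $\cosh,\sinh$; the only substantive change is that the identity $\cosh^2-\sinh^2=1$ together with $\langle\alpha,\alpha\rangle=-r^2$ flips the sign of the $g(u,0)/r^2$ term and turns $\arctan$ into $\mathrm{arctanh}$.

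I expect the main obstacle to be the bookkeeping in passing between the ambient $\mathbb{E}^4$ (or $\mathbb{E}^4_1$) and the intrinsic connection when computing $\nabla_{\psi_u}\psi_v$: one must apply the Gauss formula~\eqref{eq::CovDerAndUsualDer} with care, keeping in mind that $Z$ is based at $\alpha(u)$ so that the normal correction involves the position vector $\alpha$ rather than $\psi$, and then check that the spurious position-vector contributions cancel, leaving the tangential field $-\tfrac{1}{r}s_v\alpha'+c_v\nabla_{\alpha'}Z+\tfrac{s_v}{r}\langle\alpha',Z\rangle Z$. Once this is in place, computing $\partial_v g_{11}$ and extracting its root is routine. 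A secondary point to handle carefully is the uniqueness clause, namely distinguishing the minimum of $g_{11}$ (the genuine striction point) from the maximum, which also solves $\partial_v g_{11}=0$.
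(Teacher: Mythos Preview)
Your approach is correct and is genuinely cleaner than the paper's. The paper computes $\nabla_{\beta'}PZ$ directly in $\mathbb{E}^4$ via the Gauss formula, then pairs it separately with $\alpha'$ and with $Z'$ to obtain the equation
\[
-\bigl(1-\langle\alpha',Z\rangle^2\bigr)\tfrac{s_{2v}}{2r}+c_{2v}\langle\alpha',\nabla_{\alpha'}Z\rangle+\tfrac{r s_{2v}}{2}\langle\nabla_{\alpha'}Z,\nabla_{\alpha'}Z\rangle=0,
\]
which is then inverted. Your identity $\langle\beta',\nabla_{\beta'}PZ\rangle=\tfrac12\,\partial_v g_{11}$ produces exactly the same expression in one line from the already-computed $g_{11}$, and it has the further advantage of being intrinsic to the surface and of explaining \emph{why} the striction curve should exist: it is simply the critical locus of the squared ruling spacing. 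What the paper's approach buys is a direct sanity check against the Euclidean formula without relying on the prior computation of $g_{11}$; what yours buys is economy and a model-independent interpretation.

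Two comments. First, your final paragraph anticipates as the ``main obstacle'' the ambient computation of $\nabla_{\psi_u}\psi_v$, but your own argument \emph{never uses} that vector field explicitly: once you have $\tfrac12\partial_v g_{11}$ you only need to differentiate $g_{11}$, which is routine. So either drop that paragraph or recast it as a consistency check rather than an obstacle. Second, on uniqueness: the paper argues by taking the Euclidean limit $r\to\infty$, which is different from (and arguably weaker than) your periodicity argument. Your observation that $\partial_v g_{11}=0$ also picks up the maximum of $g_{11}$ on each ruling is sharper than anything the paper says; to finish, note that the two roots of $\tan(2v/r)=C_2/C_1$ on a period differ by $\pi r/2$, and only one gives a local minimum, while its antipodal translate $v\mapsto v+\pi r$ reproduces the same curve in $\mathbb{S}^3(r)$. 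In $\mathbb{H}^3(r)$ there is no periodicity and the $\mathrm{arctanh}$ yields a single root.
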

\begin{proof}
Any curve  $\beta$ in {$\Sigma^2\subset\mathbb{S}^3(r)$} may be written  as  
\begin{equation}\label{estricao}
    \beta(u)=\text{exp}_{\alpha(u)}(v(u)Z(u))=\cos{\left(\dfrac{v(u)}{r}\right)}\alpha(u)+r\sin{\left(\dfrac{v(u)}{r}\right)}Z(u)
\end{equation}
for some function  $v:I\rightarrow\mathbb{R}$. The corresponding velocity vector field is
\begin{equation}
     \beta' = -\dfrac{v'}{r}s_v\alpha+ v'c_vZ+c_v\alpha'+rs_vZ' = v'PZ+c_v\alpha'+rs_vZ'.
\end{equation}
Now, using that $\langle PZ,\nabla_{\beta'}PZ\rangle=0$, we find
\begin{equation}\label{estricao1}
 \langle\beta' ,\nabla_{\beta'}PZ\rangle = c_v\langle\alpha',\nabla_{\beta'}PZ\rangle
+rs_v\langle Z',\nabla_{\beta'}PZ\rangle .
\end{equation}
The idea now is to impose $\langle\beta' ,\nabla_{\beta'}PZ\rangle=0$ and look for necessary conditions for the existence of a striction curve. However, we must  write $\langle\alpha',\nabla_{\beta'}PZ\rangle$ and $\langle Z',\nabla_{\beta'}PZ\rangle$ in terms of quantities that can be computed along $\alpha$ only. First, note that
\begin{eqnarray}
    \nabla_{\beta'}PZ & = & \frac{\partial}{\partial u}PZ\vert_{\beta}+\frac{\langle\beta',PZ\rangle}{r^2}\beta\nonumber\\
    & = & -\frac{s_v}{r}\alpha'+c_vZ'-\frac{v'}{r^2}\beta+\frac{\langle v'PZ+ c_v\alpha'+rs_vZ',-\frac{1}{r}s_v\alpha+c_vZ\rangle}{r^2}\beta\nonumber\\
    & = & -\frac{s_v}{r}\alpha'+c_vZ'+\frac{\langle\alpha',Z\rangle}{r^2}\beta.
\end{eqnarray}
Therefore, we have
\begin{eqnarray}
    \langle\alpha',\nabla_{\beta'}PZ\rangle & = & \langle\alpha',-\frac{s_v}{r}\alpha'+c_vZ'\rangle+\frac{\langle\alpha',Z\rangle}{r^2}\langle\alpha',c_v\alpha+rs_vZ\rangle\nonumber\\
    & = & -\frac{s_v}{r}+c_v\langle\alpha',\nabla_{\alpha'}Z\rangle+\frac{s_v}{r}\langle\alpha',Z\rangle^2,
\end{eqnarray}
where in the last equality we used that $\langle\alpha',Z'\rangle=\langle\alpha',\nabla_{\alpha'}Z\rangle$. On the other hand,
\begin{eqnarray}
    \langle Z',\nabla_{\beta'}PZ\rangle & = & \langle Z',-\frac{s_v}{r}\alpha'+c_vZ'\rangle+\frac{\langle\alpha',Z\rangle}{r^2}\langle Z',c_v\alpha+rs_vZ\rangle\nonumber\\
     & = & -\frac{s_v}{r}\langle \alpha',\nabla_{\alpha'}Z\rangle+c_v\langle Z',Z'\rangle-c_v\frac{\langle\alpha',Z\rangle^2}{r^2}\nonumber\\
     & = & -\frac{s_v}{r}\langle \alpha',\nabla_{\alpha'}Z\rangle+c_v\langle \nabla_{\alpha'}Z,\nabla_{\alpha'}Z\rangle,
\end{eqnarray}
where in the last equality we used that $\langle\nabla_{\alpha'}Z,\nabla_{\alpha'}Z\rangle=\langle Z',Z'\rangle-\frac{1}{r^2}\langle\alpha',Z\rangle^2$.

Thus, using that $\langle\alpha',\nabla_{\beta'}PZ\rangle$ and $\langle Z',\nabla_{\beta'}PZ\rangle$ are given in terms of quantities along $\alpha$, we can write
\begin{eqnarray*}
    \langle\beta' ,\nabla_{\beta'}PZ\rangle & = & -\frac{s_vc_v}{r}+c_v^2\langle\alpha',\nabla_{\alpha'}Z\rangle+\frac{s_vc_v}{r}\langle\alpha',Z\rangle^2\nonumber\\
    & - & s_v^2\langle \alpha',\nabla_{\alpha'}Z\rangle+rs_vc_v\langle \nabla_{\alpha'}Z,\nabla_{\alpha'}Z\rangle\nonumber\\
    & = &  -\Big(1-\langle\alpha',Z\rangle^2\Big)\frac{s_{2v}}{2r}+c_{2v}\langle\alpha',\nabla_{\alpha'}Z\rangle+\frac{rs_{2v}}{2}\langle \nabla_{\alpha'}Z,\nabla_{\alpha'}Z\rangle.
\end{eqnarray*}
Finally, by imposing that $\beta$ is the striction curve, i.e., $\langle\beta',\nabla_{\beta'}PZ\rangle=0$, and noting that $g(u,0)=\det g_{ij}(u,0)=\langle\alpha',\alpha'\rangle\langle Z,Z\rangle-\langle\alpha',Z\rangle^2=1-\langle\alpha',Z\rangle^2$, we conclude that $v=v(u)$ should satisfy
\[
v(u)=\frac{r}{2}\arctan\left(-\frac{2}{r}\dfrac{\langle\alpha',\nabla_{\alpha'}Z\rangle}{\langle \nabla_{\alpha'}Z,\nabla_{\alpha'}Z\rangle-\frac{g(u,0)}{r^2}}\right).
\]
In conclusion, defining $\beta$ as in Eq. (\ref{estricao}) with $v=v(u)$ given by Eq. (\ref{estricao2}), we obtain the striction curve of our ruled surface. 

For ruled surfaces in $\mathbb{H}^3(r)$, the solution for $v=v(u)$ is given by
\[
    v(u)
    =
    \frac{r}{2}\,\mbox{arctanh}\left(-\frac{2}{r}\dfrac{\langle\alpha',\nabla_{\alpha'}Z\rangle}{\langle \nabla_{\alpha'}Z,\nabla_{\alpha'}Z\rangle 
    +
    \frac{g(u,0)}{r^2}}\right).
\]

It remains to show the uniqueness of the striction curve. (On the 3-sphere, uniqueness should be understood up to antipodals, i.e., the antipodal image of $\beta$ it is also a striction curve.) First, note the importance of working with a variable radius. It allows us to check consistence of our answers with known Euclidean results: e.g., in the limit $r\gg1$, the expressions above lead to $v(u)\approx-\langle\alpha',Z'\rangle/\langle Z',Z'\rangle$, which is thus compatible with the Euclidean result \cite{DoCarmo2016DG}. Now, assume that there exists a ruled surface on the 3-sphere, or hyperbolic space, with two striction curves $\beta$ and $\bar{\beta}$, with $\bar{\beta}\not=-\beta$ in the case of $\mathbb{S}^3(r)$. In the limit $r\gg1$, this would lead to an Euclidean ruled surface with two striction curves, which contradicts the known uniqueness of striction curves in $\mathbb{E}^3$. (In the limit $r\gg1$ the antipodal striction curve goes to infinity.)
\qed \end{proof}


\begin{figure}[t]
    \centering
    \includegraphics[width=0.75\linewidth]{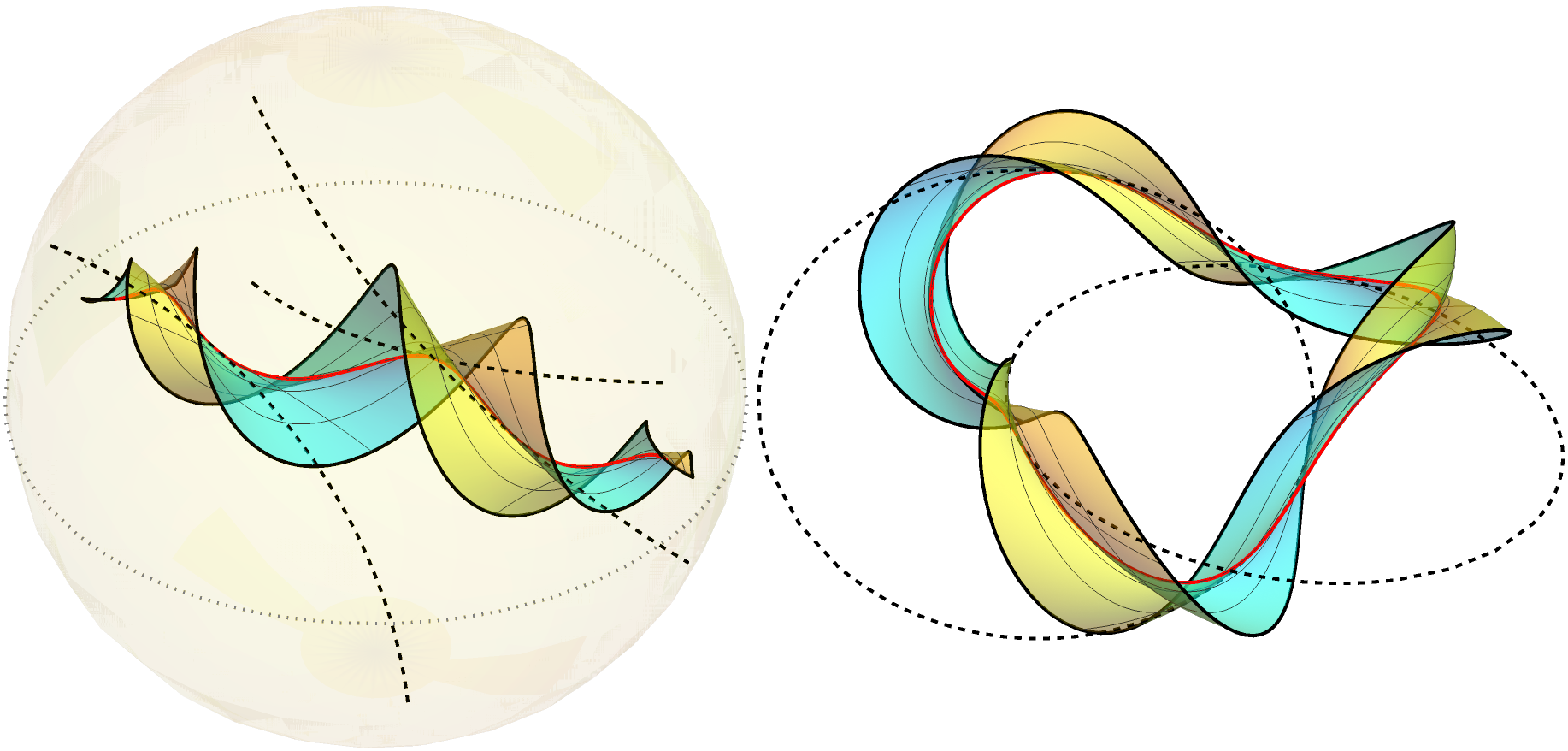}
    \caption{Tangent surface of a helix in space forms: (Left) The tangent surface in the Poincar\'e ball model of $\mathbb{H}^3(1)$ obtained from Eq. \eqref{eq::H3helicoids} through stereographic projection $(x_1,x_2,x_3,x_4)\mapsto (\frac{x_2}{1+x_1},\frac{x_3}{1+x_1},\frac{x_4}{1+x_1})$. The dotted gray line indicates the points $(x,y,z)$ in the ideal boundary with $z=0$, while the dashed black curves correspond to geodesics tangent to the generating curve; (Right) The tangent surface in $\mathbb{S}^3(1)$ obtained from Eq. \eqref{eq::S3helicoids} through stereographic projection $(x_1,x_2,x_3,x_4)\mapsto (\frac{x_1}{1-x_4},\frac{x_2}{1-x_4},\frac{x_3}{1-x_4})$. The dashed black curves correspond to geodesics tangent to the generating curve. (In the figures, the full red lines are the generating curves, which are helices corresponding to $\omega=3.0$ and $v=0.2$ in Eqs. \eqref{eq::H3helicoids} and \eqref{eq::S3helicoids}. Both surfaces have constant width. Note that in $\mathbb{S}^3$ helices are periodic if and only if $\omega\in\mathbb{Q}$ \cite{ChakrabartiInvolve2019}.) {Figures generated with Mathematica.}}
    \label{fig:TangSurf}
\end{figure}

\subsection{Extrinsically flat ruled surfaces}

Corollary \ref{Cor::KextRuledSurf} provides a necessary and sufficient condition for the extrinsic Gaussian curvature to vanish. In this subsection, we shall further simplify the expression of $K_{ext}$ with the help of the striction curve and the distribution parameter to be defined below.

An interesting example of extrinsically flat ruled surfaces is given by the tangent surfaces. Choosing a regular curve $\alpha(u)$, its \emph{tangent surface} is the ruled surface with rulings tangent to $Z(u)=\alpha'(u)$. Consequently, we have $(\alpha',Z,\nabla_{\alpha'}Z)=0$ along all points of $\alpha$. Therefore, from Corollary \ref{Cor::KextRuledSurf} every tangent surface has vanishing extrinsic curvature on its regular points. (See Figure \ref{fig:TangSurf} for examples of tangent surfaces.) It is straightforward to check that every tangent surface is singular along their generating curve. This poses the problem of locating the singular points of generic ruled surfaces in space forms. As in Euclidean space, this can be accomplished by using the striction curve.  

From now on, we may assume that the generating curve $\alpha$ is the striction curve of $\Sigma^2$, Prop. \ref{Prop::ExistStrictionCurve}. From Eq. \eqref{eq::TangentVecPsi}, we have
\begin{eqnarray}\label{estricao4}
        \psi_u\times\psi_v & =&
        \cos{\left(\dfrac{v}{r}\right)}\alpha'\times PZ\times\psi+r\sin{\left(\dfrac{v}{r}\right)}\nabla_{\alpha'}Z\times PZ\times\psi\nonumber\\
        &=& \sec\left(\frac{v}{r}\right)\left[\cos{\left(\dfrac{v}{r}\right)}\alpha'\times Z\times\psi+r\sin{\left(\dfrac{v}{r}\right)}\nabla_{\alpha'}Z\times Z\times\psi\right],
\end{eqnarray}
where in the last equality we used a similar process as the one leading to Eq. (\ref{Eq::fromAWDwToAwDw}) to replace $PZ$ by $Z$. Since $\langle\alpha',\nabla_{\alpha'}Z\rangle=0$ (striction curve) and $\langle\nabla_{\alpha'}Z,Z\rangle=0$ ($\Vert Z\Vert=1$), we have $\alpha'\times Z\times\psi=\lambda \nabla_{\alpha'}Z$. Thus, Eq. (\ref{estricao4}) becomes
\begin{eqnarray}
   \psi_u\times\psi_v &=& \sec\left(\frac{v}{r}\right)\left[\lambda\cos{\left(\dfrac{v}{r}\right)}\nabla_{\alpha'}Z+r\sin{\left(\dfrac{v}{r}\right)}\nabla_{\alpha'}Z\times Z\right].
\end{eqnarray}
From $g=\det (g_{ij})=\Vert\psi_u\times\psi_v\Vert^2$, the equation above leads to
\begin{eqnarray}\label{estricao6}
        g &=&\sec^{2}{\left(\dfrac{v}{r}\right)}\left[\lambda^2\cos^2{\left(\dfrac{v}{r}\right)}\Vert\nabla_{\alpha'}Z\Vert^2 +r^2\sin^2{\left(\dfrac{v}{r}\right)}\,\Vert\nabla_{\alpha'}Z\times Z\times\psi\Vert^2\right]\nonumber\\
        &=& \sec^{2}{\left(\dfrac{v}{r}\right)}\left[\lambda^2\cos^2{\left(\dfrac{v}{r}\right)}+r^2\sin^2{\left(\dfrac{v}{r}\right)}\right]\, \Vert\nabla_{\alpha'}Z\Vert^2.\label{eq::detGUsingLambda}
\end{eqnarray}  

We shall call $\lambda$ the \emph{distribution parameter} of the ruled surface $\Sigma^2$, which can be written as
\begin{equation}\label{eq::DefDistParameter}
\lambda\,\Vert\nabla_{\alpha'}Z\Vert^2=(\nabla_{\alpha'}Z,\alpha',Z)\Rightarrow \lambda=\frac{(\alpha',Z,\nabla_{\alpha'}Z)}{\Vert\nabla_{\alpha'}Z\Vert^2}.    
\end{equation}

Note that the existence of singular points of a ruled surface $\Sigma^2$, i.e., points where $\Vert\psi_u\times\psi_v\Vert=0$, is equivalent to having  $\sin(\frac{v}{r})=0$ and $\lambda=0$ simultaneously. Then, the locus of singular points of   $\Sigma^2\subset\mathbb{S}^3(r)$ should be contained in the striction curve or on its antipodals points, if they exist. (This is not an unexpected result at all, if something fails at $p\in\mathbb{S}^3(r)$, usually it also fails at the antipodal $-p$.) This relation between the singularities of a ruled surface and its corresponding striction curve is completely analogous to what happens in Euclidean space \cite{DoCarmo2016DG}. {A similar conclusion is valid in $\mathbb{H}^3(r)$. Note that the definition of the distribution parameter, Eq. \eqref{eq::DefDistParameter}, is the same in $\mathbb{S}^3(r)$, $\mathbb{E}^3$, and $\mathbb{H}^3(r)$.}

\begin{remark}
The singularities of ruled surfaces in  $\mathbb{S}^3(r)$ have been investigated in  \cite{IzumiyaDGA2011}. In fact,  $c_{\kappa}(t)=c_1(t)c_6(t)+c_3(t)c_4(t)$ can be related to the distribution parameter (compare Prop. \ref{Prop::KextRuledSurfDistPar} below with Prop. 4.2. of \cite{IzumiyaDGA2011}), while  the solutions $\theta_0$ of Eq. (4.5) of \cite{IzumiyaDGA2011} parametrizes the striction curve. Note, however, the expressions in   \cite{IzumiyaDGA2011} for the distribution parameter, striction curve, and the corresponding extrinsic Gaussian curvature are model-dependent since the coefficients $c_i(t)$  make sense individually only when $\mathbb{S}^3(r)$ is seen as a hypersurface in $\mathbb{E}^4$. 
\end{remark} 

Note that $K_{ext}$ vanishes if, and only if, the distribution parameter vanishes, see Corollary \ref{Cor::KextRuledSurf}. In addition, by taking the striction curve as the generating curve and using the distribution parameter, we can write $K_{ext}$  as
\begin{proposition}\label{Prop::KextRuledSurfDistPar}
Let $\Sigma^2\subset\mathbb{S}^3(r)$, {or $\Sigma^2\subset\mathbb{H}^3(r)$,} be a ruled surface with the rulings tangent to $Z$ and whose directrix $\alpha$ is the striction curve. Then, the extrinsic curvature of $\Sigma^2$ is 
\begin{equation}\label{eq::KextUsingDistParam}
{    K_{ext}(u,v)=-c_v^2\,\dfrac{\lambda(u)^2}{\left[\lambda(u)^2c_v^2+r^2s_v^2\right]^2}.    }
\end{equation}
where $\lambda$ is the distribution parameter defined in Eq. \eqref{eq::DefDistParameter},  {$s_v=\sin(\frac{v}{r})$ and $c_v=\cos(\frac{v}{r})$ in $\mathbb{S}^3(r)$, and $s_v=\sinh(\frac{v}{r})$ and $c_v=\cosh(\frac{v}{r})$ in $\mathbb{H}^3(r)$,}
\end{proposition}

\begin{remark}
In the limit $r\gg1$, the sphere $\mathbb{S}^3(r)$ {and the hyperbolic space $\mathbb{H}^3(r)$ are} locally Euclidean. Up to second order in $\frac{1}{r}\ll1$, we have $c_v\approx1$ and $s_v\approx \frac{v}{r}$. Then, for $r\gg1$, the extrinsic Gaussian curvature of a {space form} ruled surface  is  $K_{ext}\approx-\frac{\lambda^2}{[\lambda^2+v^2]^2}$, which agrees with the Euclidean result \cite{DoCarmo2016DG}.
\end{remark}

It is known that in Euclidean  \cite{TuncerGMN2015} and hyperbolic \cite{EtayoTJM2017} spaces there is a close relationship between the notion of rotation minimizing frames and developable surfaces. (A unit vector field $Z$ normal to a curve $\alpha:I\to M^3$ is said to minimize rotation along $\alpha$ if $\nabla_{\alpha'}Z$ and $\alpha'$ are parallel \cite{BishopMonthly,Etayo2016,dSdS20}.) The same relation is also valid in $\mathbb{S}^3(r)$ {and in $\mathbb{H}^3(r)$} and it follows as a corollary of Proposition \ref{Prop::KextRuledSurfDistPar}:

\begin{proposition}
The extrinsic curvature of a ruled surface {$\Sigma^2$ in $\mathbb{S}^3(r)$, or in $\mathbb{H}^3(r)$,} vanishes if and only if $\alpha'$, $Z$, and $\nabla_{\alpha'}Z$ are linearly dependent. In particular, if $Z$ is a vector field normal to $\alpha$, then $K_{ext}=0$ if, and only if, $Z$ minimizes rotation along $\alpha$.
\end{proposition}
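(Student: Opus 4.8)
The plan is to deduce the statement directly from the closed-form expression for $K_{ext}$ established earlier, and then to translate the resulting algebraic condition into the geometric language of linear dependence and rotation-minimizing frames. First I would invoke Corollary \ref{Cor::KextRuledSurf} (equivalently Proposition \ref{Prop::KextRuledSurfDistPar}), by which $K_{ext}$ equals, up to the factors $\sec^2(v/r)$ and $1/g^2$, the square of the mixed product $(\alpha',Z,\nabla_{\alpha'}Z)$. Since $\sec(v/r)\neq0$ and $g\neq0$ at every regular point, this shows that $K_{ext}$ vanishes if and only if $(\alpha'(u),Z(u),\nabla_{\alpha'}Z(u))=0$ (and for the whole surface to be extrinsically flat, this must hold for every $u$).

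Next I would interpret this mixed product geometrically. By the defining relation \eqref{def::VectorProduct} we have $(\alpha',Z,\nabla_{\alpha'}Z)=\langle\alpha',Z\times\nabla_{\alpha'}Z\rangle=\det(q,\alpha',Z,\nabla_{\alpha'}Z)$, and expressed in an orthonormal basis of the three-dimensional space $T_{\alpha(u)}\mathbb{S}^3(r)$ this is precisely the scalar triple product of $\alpha'$, $Z$, and $\nabla_{\alpha'}Z$, i.e.\ the determinant of their coordinate matrix. A determinant of three vectors in a $3$d space vanishes exactly when the vectors are linearly dependent, which yields the first equivalence.

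Finally, for the \emph{in particular} clause I would add the hypothesis $\langle Z,\alpha'\rangle=0$ and use $\Vert Z\Vert=1\Rightarrow\langle\nabla_{\alpha'}Z,Z\rangle=0$, so that $Z$ is orthogonal to both $\alpha'$ and $\nabla_{\alpha'}Z$. Taking the inner product of a generic dependence relation $a\,\alpha'+b\,Z+c\,\nabla_{\alpha'}Z=0$ with $Z$ forces $b=0$; hence linear dependence of the triple collapses to linear dependence of $\{\alpha',\nabla_{\alpha'}Z\}$, that is, $\nabla_{\alpha'}Z\parallel\alpha'$. By definition this is exactly the statement that $Z$ minimizes rotation along $\alpha$, completing the argument.

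As this proposition is essentially a corollary of the preceding formula, I expect no genuine analytic difficulty. The only points requiring care are (i) identifying the mixed product with a bona fide $3\times3$ determinant on the tangent space, so that the elementary fact ``zero determinant iff linearly dependent'' applies, and (ii) in the second part, correctly using the two orthogonality relations $\langle Z,\alpha'\rangle=0$ and $\langle\nabla_{\alpha'}Z,Z\rangle=0$ to decouple the $Z$-component; this bookkeeping is the most likely place for a slip.
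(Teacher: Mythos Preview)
Your proposal is correct and matches the paper's approach: the paper presents this proposition simply as a corollary of Proposition~\ref{Prop::KextRuledSurfDistPar} (equivalently Corollary~\ref{Cor::KextRuledSurf}) without writing out a separate proof, and your argument supplies exactly the details one would expect---reading off $K_{ext}=0\Leftrightarrow(\alpha',Z,\nabla_{\alpha'}Z)=0$, identifying the mixed product with a $3\times3$ determinant on the tangent space, and using the two orthogonality relations to reduce the linear dependence to $\nabla_{\alpha'}Z\parallel\alpha'$.
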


Let us now proceed with a characterization of developable surfaces. 
We have to distinguish between two cases
\begin{enumerate}
    \item If $\nabla_{\alpha'}Z\times Z=0$, then we should have $\nabla_{\alpha'}Z=0$  since $Z$ is a unit vector field. In other words, $\Sigma^2$ is a cylindrical surface.
    \item Assume $\nabla_{\alpha'}Z\times Z\not=0$ in all points of $\alpha$, which in particular implies $\nabla_{\alpha'}Z\not=0$. From Eq. (\ref{eq::KextUsingDistParam}), we should study surfaces for which
\begin{equation}
\lambda=0 \Leftrightarrow (\alpha',Z,\nabla_{\alpha'}Z)=0.    
\end{equation}
   If follows from Eq. (\ref{eq::detGUsingLambda}) that the striction curve $\alpha$ [and its antipodal $-\alpha$, when $\Sigma^2\subset\mathbb{S}^3(r)$] is the locus of the singular points of $\Sigma^2$. If $\alpha'\equiv0$, then the striction curve is a point and the corresponding ruled surface can be seen as a cone with one vertex when $\Sigma^2\subset\mathbb{H}^3(r)$ and two vertexes when $\Sigma^2\subset\mathbb{S}^3(r)$.  On the other hand, if $\alpha$ is regular in all its points, i.e., $\alpha'\not=0$, then from $\langle \alpha',\nabla_{\alpha'}Z\rangle=0$ we should have $Z$ and $\alpha'$ parallel, which then says that $\Sigma^2$ is the tangent surface of $\alpha$.
\end{enumerate}

The characterization above is analogous to the Euclidean one \cite{DoCarmo2016DG}. This happens because we were able to introduce a distribution parameter. Then, dividing the domain of $\alpha$ in open intervals where each of the above conditions hold, every developable surface is a union of pieces of cylinders, cones, and tangent surfaces. (If the zeros of $\nabla_{\alpha'}Z\times Z$ and $\alpha'$ form a discrete set, then the set of points where the characterization above breaks is a union of curves in $\Sigma^2$. The case where there exist accumulation points is not discussed here.)

\section{Characterization of extrinsically flat surfaces and space forms}

In this section, we show that extrinsically flat surfaces in space forms must be ruled. Though not new, to the best of our knowledge all the known proofs are model dependent \cite{HondaTMJ2012,IzumiyaDGA2011,PortnoyPJM1975,Spivak1979v4}. We shall provide a new and model-independent proof for this classification, which has the advantage of allowing us to identify the essential property that makes the {implication} works. We then prove that space forms can be characterized by the property that there exist  extrinsically flat surfaces tangent to any given 2-plane and that they all must be ruled. Therefore, our theorem can be seen as a generalization of the characterization of space forms by the existence of totally geodesic surfaces tangent to any given plane at any point (Cartan theorem \cite{Cartan1946,Spivak1979v4}), i.e., totally geodesic can be replaced by extrinsically flat and ruled.

\begin{definition}
A smooth surface $\Sigma^2$ of a Riemannian manifold $M^3$ is said to be \emph{developable} if {there exists a coordinate system $(u,v)$ where the} Gauss normal map $\xi$ is stationary along one of the family of coordinate curves, {e.g., $\nabla_{\partial_v}\,\xi=0$}. 
\end{definition}

\begin{lemma}\label{lemma::DevIFFKextZero}
If a surface  $\Sigma^2\subset M^3$ is developable, then its extrinsic Gaussian curvature $K_{ext}$ vanishes. Reciprocally, if $K_{ext}$ vanishes and if $p_0\in\Sigma^2$ is a parabolic point\footnote{{A point $p$ is said to be a \emph{parabolic point} if only one of the principal curvatures at $p$ vanishes.}} or an interior point of the set of planar points\footnote{{A point $p$ is said to be a \emph{planar point} if both principal curvatures at $p$ vanish.}}, then $\Sigma^2$ is a developable surface in an open neighborhood of $p_0$.
\end{lemma}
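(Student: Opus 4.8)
The plan is to translate both the \emph{developable} condition and extrinsic flatness into statements about the shape operator $S_p(X)=-(\nabla_X\xi)(p)$ and then handle the two implications separately. Recall that $K_{ext}=\kappa_1\kappa_2=\det S_p$, since the principal curvatures are the eigenvalues of the symmetric operator $S_p$.

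For the forward direction, suppose $\Sigma^2$ is developable, so that in some coordinate system $(u,v)$ we have $\nabla_{\partial_v}\xi=0$. By the very definition of the shape operator this says $S(\partial_v)=0$, so the coordinate vector field $\partial_v$ lies in $\ker S$. Since $\partial_v$ is nonvanishing on a regular coordinate patch, $S_p$ is singular at every such point, hence $K_{ext}=\det S_p=0$. This direction requires no more than unwinding the definitions.

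For the converse I would argue locally near $p_0$ and split into the two permitted cases. If $p_0$ is an interior point of the set of planar points, then $S\equiv 0$ on an open neighborhood $U$; consequently $\nabla_X\xi=-S(X)=0$ for every tangent field $X$, and in particular $\nabla_{\partial_v}\xi=0$ in any coordinate system on $U$, so $\Sigma^2$ is trivially developable near $p_0$. If instead $p_0$ is parabolic, then exactly one principal curvature vanishes there, say $\kappa_1(p_0)=0$ and $\kappa_2(p_0)\neq 0$. By continuity $\kappa_2\neq 0$ on a neighborhood $U$, and since $K_{ext}=\kappa_1\kappa_2\equiv 0$ we get $\kappa_1\equiv 0$ throughout $U$. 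Because the two principal curvatures stay distinct on $U$, the null eigenspace $\ker S_p$ depends smoothly on $p$, yielding a smooth nonvanishing unit field $e$ with $S(e)=0$. Straightening $e$ by the flow-box theorem produces coordinates $(u,v)$ on a possibly smaller neighborhood of $p_0$ in which $\partial_v$ is proportional to $e$; then $S(\partial_v)=0$, so $\nabla_{\partial_v}\xi=0$ and $\Sigma^2$ is developable near $p_0$.

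The main obstacle is exactly what forces the hypothesis on $p_0$ in the converse: the null distribution $p\mapsto\ker S_p$ is guaranteed to be a smooth line field only where the principal curvatures remain distinct (the parabolic case) or where $S$ vanishes on an open set (the planar-interior case). At a planar point lying on the boundary of the parabolic set, the null direction can fail to extend to a continuous field, so no adapted coordinate system need exist; the stated hypotheses are precisely what excludes this pathology and lets the flow-box construction proceed.
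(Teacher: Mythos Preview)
Your proof is correct and follows essentially the same route as the paper: in both arguments the null direction of the shape operator is identified as a smooth line field on the appropriate neighborhood, and coordinates are adapted to it so that $\nabla_{\partial_v}\xi=0$. Your forward direction via $\det S_p$ is in fact crisper than the paper's informal ``spherical image is a curve'' remark, and your explicit invocation of the flow-box theorem and discussion of why the boundary-planar case is excluded make the write-up slightly more complete, but there is no genuinely different idea.
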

\begin{proof}
If $\Sigma^2$ is developable, then its spherical image under $\xi$ is a curve and, consequently, $K_{ext}=0$.

Reciprocally, if $K_{ext}\equiv 0$, then  $\kappa _1\kappa _2=0$ and we may assume, without loss of generality, that $\kappa_2\equiv 0$ in an open neighborhood $\mathcal{U}$ of $p_0$. Let   $V\in T_p\Sigma$, $p\in\Sigma$, be a unit vector such that $\kappa _n(p;V)=\kappa_2(p)=0$. Thus, $V$ is both an asymptotic and a principal direction and it follows that
\begin{equation}
\nabla_V\xi=-\kappa_n V = 0.
\end{equation}
Parametrizing $\Sigma^2$ with a coordinate system $(u,v)$ in the neighborhood $\mathcal{U}$ in which the $v$-curves are tangent to $V$ and the $u$-curves are tangent to the direction perpendicular to $V$ shows that $\xi$ is stationary along one of the family coordinate curves: $\exists\,A,\,\partial_v=AV\Rightarrow \nabla_{\partial_v}\xi=A\nabla_{V}\xi=0$.
\qed \end{proof}

In Euclidean space, surfaces with vanishing (extrinsic) Gaussian curvature should be ruled. It is known that the same is also true in space forms. Now, we shall provide a model-independent proof for this implication.

\begin{theorem}\label{thr::ExtFlatImpliesRuledInSpcForm}
Let $\Sigma^2$ be a surface with $K_{ext}=0$ in a space form $M^3$. If $p_0\in\Sigma^2$ is a parabolic point or an interior point of the set of planar points, then $\Sigma^2$ is a ruled surface in an open neighborhood of $p_0$.
\end{theorem}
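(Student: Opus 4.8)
The plan is to show that the null direction of the shape operator, which exists because $K_{ext}=0$ forces a vanishing principal curvature, integrates to geodesics of $M^3$, so that $\Sigma^2$ is foliated by geodesics and hence ruled. By Lemma \ref{lemma::DevIFFKextZero}, in a neighborhood $\mathcal{U}$ of $p_0$ we may assume $\kappa_2\equiv 0$ and choose coordinates $(u,v)$ so that the $v$-curves are tangent to the asymptotic/principal direction $V$ with $\nabla_V\xi=0$, and $\Sigma^2$ is developable. The key claim I would isolate is that each $v$-curve $\gamma(v)=\psi(u_0,v)$ is a geodesic of the ambient manifold $M^3$, not merely of $\Sigma^2$; once this is established, the surface is swept out by a one-parameter family of ambient geodesics and is ruled by definition.

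To prove the $v$-curves are ambient geodesics, I would decompose the ambient acceleration $\nabla_{\partial_v}\partial_v$ into its tangential part (the intrinsic covariant derivative on $\Sigma^2$) and its normal part (given by the second fundamental form). The normal component is $h_{22}\,\xi = \langle \nabla_{\partial_v}\partial_v,\xi\rangle\xi$; since $\partial_v$ is parallel to the principal direction $V$ with $\kappa_n(V)=0$, this normal part vanishes. For the tangential part I would use the developability condition $\nabla_{\partial_v}\xi=0$ together with the fact that $V$ is a principal direction: differentiating $\langle \partial_v,\xi\rangle=0$ and $\langle \partial_v,\partial_u\rangle$-type relations, and choosing the coordinate $v$ to be arc length along the curves, one shows the tangential acceleration is also zero. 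The cleanest route is to observe that an integral curve of a parallel, principal, asymptotic unit direction field has zero geodesic curvature in $\Sigma^2$ and zero normal curvature in $M^3$, so its full ambient acceleration vanishes, making it a geodesic of $M^3$.

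The step I expect to be the main obstacle is verifying that the tangential (geodesic-curvature) component genuinely vanishes rather than just the normal component; the vanishing of $\kappa_2$ controls only the normal direction, so I must extract extra information from developability. Here I would exploit that $M^3$ has \emph{constant} sectional curvature: the Codazzi equations reduce to their Euclidean form because the ambient curvature term $\langle R(X,Y)\xi,W\rangle$ is symmetric and contributes nothing to the relevant combination, so the principal direction associated to the identically-zero eigenvalue is integrable with geodesic integral curves exactly as in $\mathbb{E}^3$. This is precisely the place where constant curvature is essential and where the later Main Theorem will show the implication can fail otherwise. Concretely, I would write Codazzi as $\nabla_X(SY)-\nabla_Y(SX)-S[X,Y]=(R(X,Y)\xi)^\top$, evaluate it on the principal frame, and use $R(X,Y)\xi = K_0(\langle Y,\xi\rangle X-\langle X,\xi\rangle Y)=0$ since $\xi\perp T\Sigma^2$, thereby recovering the classical argument and concluding that the asymptotic lines are ambient geodesics, which completes the proof that $\Sigma^2$ is ruled near $p_0$.
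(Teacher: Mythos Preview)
Your proposal is correct and follows essentially the same route as the paper: both arguments use the Codazzi equation, observe that in a space form the ambient curvature term $(R(X,Y)\xi)^{\top}$ (equivalently the coefficients $R_{ijk3}$ computed in the paper) vanishes, and then conclude from $h_{11}\neq 0$ that the geodesic curvature of the asymptotic $v$-lines is zero, so they are ambient geodesics. The only small omission is the interior-of-planar-points case, where $\kappa_1\equiv 0$ as well and the Codazzi step yields no information; the paper handles this separately by noting that such a neighborhood is totally geodesic, hence trivially ruled.
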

\begin{proof}
Let $P$ be the set of planar points of $\Sigma^2$. If $p_0\in \mathcal{U}\subset P$, $\mathcal{U}$ open, then $\xi$ is constant in $\mathcal{U}$ and, therefore, it is a piece of a totally geodesic surface of $M$. In particular, $\mathcal{U}\subset\Sigma^2$ is a piece of a ruled surface. Indeed, take a curve $\alpha:I\to \mathcal{U}\subset\Sigma^2$ and let $\alpha_u(v)$ be a geodesic orthogonal to $\alpha$ at $\alpha(u)$. Since $\Sigma^2$ is totally geodesic, every $\alpha_u(v)$ is also a geodesic of $M^3$. Thus, $\Sigma^2$ is ruled.

On the other hand, if $p_0\not\in P$, then by continuity there exists a neighborhood $\mathcal{U}$ of $p_0$ containing only parabolic points of $\Sigma^2$. Let $\{\partial_1,\partial_2,\partial_3\}$ be a {frame field on} $M^3$ such that $\partial_1\vert_{\Sigma^2},\partial_2\vert_{\Sigma^2}$ are the coordinate vector fields pointing in the principal directions in $\mathcal{U}$, with $\kappa_2=0$, and $\partial_3\vert_{\Sigma^2}=\xi$ is the unit normal of $\Sigma^2$. Now, if $R$ is the Riemann curvature tensor of $M^3$ and if we use Latin letters $i,j,k,\dots$ running from $1$ to $2$, we have for all $p\in \mathcal{U}\subseteq\Sigma^2$
\begin{eqnarray}
    \langle R(\partial_i,\partial_j)\partial_k,\xi\rangle_p & = & \langle\nabla_{\partial_j}\nabla_{\partial_i}\partial_k-\nabla_{\partial_i}\nabla_{\partial_j}\partial_k,\xi\rangle_p\nonumber\\
    &=& \langle\nabla_{\partial_j}(\Gamma_{ik}^m\partial_m+h_{ik}\xi)-\nabla_{\partial_i}(\Gamma_{jk}^m\partial_m+h_{jk}\xi),\xi\rangle_p\nonumber\\
    & = & \langle A \partial_1+B\partial_2+(h_{jm}\Gamma_{ik}^m-h_{im}\Gamma_{jk}^m+h_{ik,j}-h_{jk,i})\xi,\xi\rangle_p\nonumber\\
    R_{ijk3}(p) & = & h_{jm}\Gamma_{ik}^m-h_{im}\Gamma_{jk}^m+h_{ik,j}-h_{jk,i}\,,\label{eq::Rijk3}
\end{eqnarray}
where we used that $\xi(p)=\partial_3(p)$ and $A\partial_1$, $B\partial_2$ do not contribute to $R_{ijk3}$. In addition, from Lemma \ref{lemma::DevIFFKextZero}, we have
\begin{equation}\label{eq::h11hi2ForDevSurf}
h_{2i}=\langle \nabla_{\partial_2}\partial_i,\xi\rangle=-\langle \partial_i,\nabla_{\partial_2}\xi\rangle=0\Rightarrow h_{21}=h_{22}=0.
\end{equation}
Now, since $M^3$ is a space form and $i,j,k$ are tangent, i.e., $i,j,k\not=3$, we have $R_{ijk3}=K_0(\langle\partial_i,\partial_k\rangle\langle\partial_j,\partial_3\rangle-\langle\partial_i,\partial_3\rangle\langle\partial_j,\partial_k\rangle)=0$. Finally, applying this together with Eqs. (\ref{eq::Rijk3}) and (\ref{eq::h11hi2ForDevSurf}) for $j=k=2$, gives 
$$0=R_{i223}=h_{2r}\Gamma_{i2}^r-h_{ir}\Gamma_{22}^r+h_{i2,2}-h_{22,i}=-h_{i1}\Gamma_{22}^1.$$
We must have $h_{11}\not=0$ in $\mathcal{U}$, otherwise $\mathcal{U}$ would contain a planar point. Thus,  $\Gamma_{22}^1=0$ and it follows that
\[
  \nabla_{\partial_2}\partial_2=\Gamma_{22}^i\partial_i+h_{22}\xi=\Gamma_{22}^2\partial_2\Rightarrow\kappa_{g}(\partial_2)=\frac{1}{\sqrt{g_{11}}}(\frac{\rmd v}{\rmd s_v})^{2}\langle\nabla_{\partial_2}\partial_2,-\partial_1\rangle=0, 
\]
where $s_v$ is the arc-length parameter of the curves tangent to $\partial_2$. Therefore, the asymptotic lines in $\mathcal{U}\subset\Sigma^2$, i.e., curves tangent to $\partial_2$, are geodesics of $M^3$. In other words, $\mathcal{U}\subset\Sigma^2$ is a piece of a ruled surface.
\qed \end{proof}

At first, one may naively expect that $K_{ext}=0$ should imply the surface is ruled. After all, extrinsically flat surfaces are foliated by a one-parameter family of curves along which the normal is stationary (Lemma \ref{lemma::DevIFFKextZero} is valid on any manifold). 
However, a close look at the proof of Theorem \ref{thr::ExtFlatImpliesRuledInSpcForm} reveals that some sort of restriction on the curvature tensor is expected. Indeed, we  used in Theorem \ref{thr::ExtFlatImpliesRuledInSpcForm} that $R_{1223}=0$ is a sufficient condition for the validity of the implication ``$K_{ext}=0\Rightarrow \Sigma^2\mbox{ is ruled}$". The result below shows that $R_{1223}=0$ is also a necessary condition.

\begin{proposition}\label{Prop::R1223zeroIFFruled}
Let $\Sigma^2$ be an extrinsically flat  with unit normal $\xi$ in a generic manifold $M^3$. Let $(u,v)$ be a coordinate system and  $\partial_1$ and $\partial_2$ the corresponding tangent vector fields, where the $v$-curves have vanishing principal curvature. Then, the $v$-curves are geodesics, i.e., $\Sigma^2$ is ruled if, and only if,  $R_{1223}=0$, where $\partial_3=\xi$.      
\end{proposition}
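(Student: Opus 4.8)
The plan is to repeat the local computation from the proof of Theorem \ref{thr::ExtFlatImpliesRuledInSpcForm}, but \emph{without} invoking the space-form hypothesis, so that the curvature term $R_{1223}$ is retained rather than set to zero. As in that proof, I work in the coordinate system $(u,v)$ adapted to the surface: $\partial_1,\partial_2$ are tangent to $\Sigma^2$ with the $v$-curves tangent to $\partial_2$ (the principal direction with vanishing principal curvature), and $\partial_3=\xi$ is the unit normal. Since $\Sigma^2$ is extrinsically flat and the $v$-curves have vanishing principal curvature, Lemma \ref{lemma::DevIFFKextZero} applied along $\partial_2$ gives $\nabla_{\partial_2}\xi=0$, hence $h_{2i}=\langle\nabla_{\partial_2}\partial_i,\xi\rangle=0$ for $i=1,2$; in particular $h_{21}=h_{22}=0$. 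The remaining nonzero second-fundamental-form coefficient is $h_{11}$, and as before $h_{11}\neq0$ on a neighborhood of a parabolic point (otherwise we would have a planar point).

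The key step is the Gauss--Codazzi-type identity \eqref{eq::Rijk3}, which was derived in the proof of Theorem \ref{thr::ExtFlatImpliesRuledInSpcForm} \emph{for an arbitrary manifold}, since that derivation only used the definitions of the Christoffel symbols, the second fundamental form, and the curvature tensor — never the constancy of $K_0$. Specializing \eqref{eq::Rijk3} to $i=1$, $j=k=2$ and feeding in $h_{21}=h_{22}=0$, exactly as in the previous proof, collapses the right-hand side to a single term:
\begin{equation}
R_{1223}=h_{2r}\Gamma_{12}^r-h_{1r}\Gamma_{22}^r+h_{12,2}-h_{22,1}=-h_{11}\Gamma_{22}^1.
\end{equation}
This is the crux of the argument, and it is the same line that in Theorem \ref{thr::ExtFlatImpliesRuledInSpcForm} read $0=R_{1223}=-h_{11}\Gamma_{22}^1$; here I simply keep the left-hand side general.

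From the identity $R_{1223}=-h_{11}\Gamma_{22}^1$ both implications follow immediately because $h_{11}\neq0$. If the $v$-curves are geodesics of $M^3$, then $\nabla_{\partial_2}\partial_2$ has no component along $\partial_1$ after the usual reparametrization, forcing $\Gamma_{22}^1=0$ and hence $R_{1223}=0$. Conversely, if $R_{1223}=0$, then dividing by $h_{11}\neq0$ yields $\Gamma_{22}^1=0$, so $\nabla_{\partial_2}\partial_2=\Gamma_{22}^2\partial_2+h_{22}\xi=\Gamma_{22}^2\partial_2$ is tangent to the $v$-curve; the geodesic curvature $\kappa_g(\partial_2)$ then vanishes exactly as computed in the earlier proof, so the $v$-curves are geodesics and $\Sigma^2$ is ruled. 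I do not expect a serious obstacle here; the only point requiring mild care is verifying that the derivation of \eqref{eq::Rijk3} genuinely did not use the space-form assumption (it did not), and handling the reparametrization of $\partial_2$ to arc length when relating $\Gamma_{22}^1$ to the geodesic curvature.
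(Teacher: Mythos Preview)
Your proposal is correct and follows essentially the same route as the paper: both hinge on the Codazzi-type identity $R_{1223}=-h_{11}\Gamma_{22}^1$ derived from \eqref{eq::Rijk3} together with $h_{12}=h_{22}=0$, and then read off the equivalence from $h_{11}\neq0$. The only cosmetic differences are that the paper parametrizes the ruled direction via the exponential map (so $\nabla_{\partial_2}\partial_2=0$ on the nose) and disposes of the planar case in one line, whereas you argue directly in the given coordinates; your remark about ``reparametrization'' is in fact unnecessary, since an unparametrized geodesic already satisfies $\nabla_{\partial_2}\partial_2\parallel\partial_2$, giving $\Gamma_{22}^1=0$ without changing coordinates.
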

\begin{proof}
We already know $R_{1223}=0$ is sufficient. Now, parametrize $\Sigma^2$ as $X(u,v)=\exp_{\alpha(u)}(v\,Z(u))$ for some unit vector field $Z$ along $\alpha$. Then, $\nabla_{\partial_2}\partial_2=0$ and $h_{22}=\langle\nabla_{\partial_2}\partial_2,\xi\rangle=0$. Since $K_{ext}=\frac{h}{g}=-\frac{(h_{12})^2}{g}$, where $h=\det(h_{ij})$ and $g=\det(g_{ij})$, from $K_{ext}=0$, we have $h_{12}=0$. Thus, the shape operator is $A=\mbox{diag}(h_{11},0)$ and, therefore, the coordinate curves  are lines of curvature. Repeating the reasoning on the proof of the previous theorem, it follows that $ R_{ijk3} = h_{jm}\Gamma_{ik}^m-h_{im}\Gamma_{jk}^m+h_{ik,j}-h_{jk,i}$. If all $h_{ij}$ vanish, then trivially $R_{1223}=0$. On the other hand, if $h_{11}\not=0$, then the geodesic curvature of the asymptotic lines $v\mapsto X(u,v)$ is
\begin{equation}
    \kappa_{g}(\partial_2)=\frac{1}{\sqrt{g_{11}}}(\frac{\rmd v}{\rmd s_v})^{2}\langle\nabla_{\partial_2}\partial_2,-\partial_1\rangle \propto \Gamma_{22}^1=-\frac{\langle R(\partial_1,\partial_2)\partial_2,\xi\rangle}{h_{11}},
\end{equation}
where $s_v$ is the arc-length parameter of the asymptotic lines. Finally, the assumption $\kappa_g(\partial_2)=0$ implies that $\langle R(\partial_1,\partial_2)\partial_2,\xi\rangle=0$, as desired.
\qed \end{proof}

Now, we are able to establish the main result of this work.

\begin{theorem}\label{Theor::CharSpcFraRuledSurf}
Let $M^3$ be a connected Riemannian manifold having the property that there exists an extrinsically flat surface tangent to any given 2d plane and also that every extrinsically flat surface in $ M^3$ is ruled. Then, $M^3$ must have constant sectional curvature.
\end{theorem}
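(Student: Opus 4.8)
The plan is to exploit Proposition \ref{Prop::R1223zeroIFFruled}, which transforms the hypothesis ``every extrinsically flat surface is ruled'' into a condition on the curvature tensor of $M^3$. Specifically, fix a point $p\in M^3$ and an arbitrary $2$-plane $\Pi\subset T_pM^3$. By hypothesis there exists an extrinsically flat surface $\Sigma^2$ tangent to $\Pi$ at $p$, and by hypothesis this surface is ruled. I would set up the adapted frame used in Proposition \ref{Prop::R1223zeroIFFruled}, namely orthonormal vector fields $\partial_1,\partial_2$ spanning $T\Sigma^2$ with the $v$-curves (tangent to $\partial_2$) having vanishing principal curvature, and $\partial_3=\xi$ the unit normal. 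Since $\Sigma^2$ is ruled, the proposition yields $R_{1223}(p)=\langle R(\partial_1,\partial_2)\partial_2,\xi\rangle_p=0$. The goal is then to show that as $\Pi$ ranges over all planes through $p$, the vanishing of all such mixed curvature components forces $M^3$ to have constant sectional curvature at $p$, and since $p$ is arbitrary and $M^3$ is connected, Schur's lemma upgrades pointwise constancy to global constant curvature.

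The heart of the argument is therefore a purely algebraic statement about the curvature tensor at a single point. First I would make explicit what freedom we have: the plane $\Pi=\mbox{span}\{\partial_1,\partial_2\}$ is arbitrary, and moreover the normal $\xi=\partial_3$ is determined as the orthogonal complement of $\Pi$, so $\{\partial_1,\partial_2,\partial_3\}$ is an essentially arbitrary orthonormal triple. However, there is a subtlety: the distinguished role of $\partial_2$ (the ruling direction with zero principal curvature) is not free but is fixed by the surface's second fundamental form. I would argue that by choosing different extrinsically flat surfaces tangent to the same plane, or by rotating within the plane, one can arrange the asymptotic direction $\partial_2$ to be any chosen unit vector in $\Pi$; this gives $R(\partial_1,\partial_2,\partial_2,\partial_3)=0$ for a sufficiently rich family of orthonormal triples. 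The claim to establish is that $R_{ijk\ell}=K_0(\delta_{ik}\delta_{j\ell}-\delta_{i\ell}\delta_{jk})$ for a single scalar $K_0$, equivalently that the sectional curvature $K_p(X,Y)$ is independent of the plane $\mbox{span}\{X,Y\}$.

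The key algebraic step is to show that vanishing of all the ``mixed'' components $R(e_1,e_2,e_2,e_3)=0$ (for orthonormal triples $e_1,e_2,e_3$, with the appropriate freedom) forces the curvature operator to act as a scalar multiple of the identity on $2$-forms, which is precisely the condition for constant sectional curvature. Concretely, I would consider an orthonormal basis $\{e_1,e_2,e_3\}$ of $T_pM^3$ and, by applying the ruled condition to the three coordinate planes and to rotated frames, derive: (i) all components of $R$ with exactly three distinct indices vanish, and (ii) the three sectional curvatures $K(e_1,e_2)$, $K(e_1,e_3)$, $K(e_2,e_3)$ are mutually equal. For (ii) the trick is to rotate the frame by $45^\circ$ in a coordinate plane and expand the vanishing mixed component of the rotated frame in terms of the original components; the cross terms produce an equation of the form $K(e_i,e_k)-K(e_j,e_k)=0$. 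Since this holds for every orthonormal basis, $K_p$ is constant on all planes, which is the claimed pointwise constant curvature.

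The main obstacle I anticipate is making rigorous the claim that the ruling direction $\partial_2$ can be chosen freely within a given tangent plane while still corresponding to a genuine extrinsically flat surface realizing that plane. The hypothesis only guarantees \emph{some} extrinsically flat surface tangent to each plane, and for that surface $\partial_2$ is the uniquely determined null direction of its shape operator; a priori we do not control which asymptotic direction appears. I would address this either by varying the plane itself (using that $R_{1223}=0$ for a two-parameter family of planes through $p$ already over-determines the tensor, so one extracts enough independent equations without needing to prescribe $\partial_2$), or by using a limiting/continuity argument: as the tangent plane $\Pi$ sweeps out the Grassmannian of $2$-planes and the associated surfaces vary, the collection of vanishing mixed components covers all the algebraically independent combinations. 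Verifying that the resulting linear system on the components of $R$ has only the constant-curvature solutions is the decisive computation, and I expect the $45^\circ$-rotation identity to be the clean way to close it.
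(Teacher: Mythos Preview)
Your route and the paper's are close cousins. The paper does not invoke Proposition~\ref{Prop::R1223zeroIFFruled} and then run a pure curvature-algebra argument; instead it parametrises the given extrinsically flat surface as $\psi(u,v)=\exp_{\alpha(u)}(vZ(u))$, uses that the rulings are simultaneously asymptotic and principal to obtain the key identity $\nabla_{\partial_1}\partial_2=A\,\partial_1$, and feeds this directly into the curvature tensor to get $R(\partial_2,\partial_1)\partial_2=-(\partial_vA+A^2)\partial_1$. That is the same information as your $R_{1223}=0$, just derived by a Jacobi-field computation rather than by citing the proposition. The paper then asserts that the resulting expression for the sectional curvature ``is a function of $p$ only'' and invokes Schur. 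So both strategies extract $\langle R(\partial_1,\partial_2)\partial_2,\xi\rangle=0$ for the adapted frame and attempt to pass from this to isotropy.

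The obstacle you flag is genuine, and more damaging than your sketch acknowledges. In dimension~$3$ one has $R_{1223}=-\mathrm{Ric}(\partial_1,\partial_3)$ for any orthonormal frame, so the content of the ruled condition at $p$ is exactly $\mathrm{Ric}(\partial_1,\xi)=0$ for the \emph{particular} $\partial_1$ orthogonal to the ruling direction. But for an arbitrary symmetric operator $S$ and any unit $\xi$, there is always some $\partial_1\perp\xi$ with $\langle S\partial_1,\xi\rangle=0$ (take $\partial_1$ orthogonal, within $\xi^{\perp}$, to the projection of $S\xi$). Hence the bare statement ``for every plane there exists an adapted frame with $R_{1223}=0$'' imposes no constraint whatsoever on the Ricci tensor; a two-parameter family of vacuous conditions is still vacuous, so neither your continuity sketch nor ``varying the plane'' rescues the argument, and the $45^\circ$ rotation presupposes precisely the freedom you concede you lack. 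The paper's proof skates over this same point with the unjustified phrase ``a function of $p$ only.'' To close the gap you must either (i) produce, for each plane, extrinsically flat surfaces realising \emph{every} asymptotic direction in that plane---so that $\mathrm{Ric}(v,\xi)=0$ for all $v\perp\xi$, forcing $\mathrm{Ric}$ to be a multiple of the metric and hence $M^3$ to have constant curvature---or (ii) extract from the ruled structure something strictly stronger than the single pointwise identity $R_{1223}(p)=0$.
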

\begin{proof}
Let $\pi_p$ be a 2-plane of $T_pM^3$ and let $\Sigma^2$ be an extrinsically flat surface such that $T_p\Sigma^2=\pi_p$. By assumption, we can parametrize any extrinsically flat surface as a ruled surface. Let $\alpha :I\rightarrow M$ be a curve parametrized by arc-length $u$ such that $\alpha(0)=p\in M$ and let $Z$ be a unit vector field along $\alpha$. Now, consider $\Sigma^2$ parametrized by  $\psi(u,v)=\exp_{\alpha(u)}(vZ(u))$. Then, $\psi(0,0)=p\in\Sigma^2$, $\psi(u,0)=\alpha(u)$, and $\psi(\cdot ,v)=\gamma(v)$ are the rulings of $\Sigma^2$, which are geodesics of $M^3$. Note that, in addition to $\langle\partial_2,\partial_2\rangle=1$, we can also assume $\langle\partial_1,\partial_2\rangle\vert_{(u,0)}=0$ and $\langle\partial_1,\partial_1\rangle\vert_{(u,0)}=1$, i.e., $\langle\alpha',Z\rangle=0$ along $\alpha$, where $\partial_1=\partial/\partial u$ and $\partial_2=\partial/\partial v$ are the coordinate tangent vectors. 

If $R$ denotes the curvature tensor of $M^3$, then using that $[\partial_1,\partial_2]=0$ and  $\nabla_{\partial_2}{\partial_2}=0$, it follows that
\begin{equation}
    R_p(\partial_2,\partial_1)\partial_2=\nabla_{\partial_1}\nabla_{\partial_2}{\partial_2}-\nabla_{\partial_2}\nabla_{\partial_1}{\partial_2}+\nabla_{[\partial_1,\partial_2]}\partial_2 = -\nabla_{\partial_2}\nabla_{\partial_1}{\partial_2}.
\end{equation}
Since $\langle\partial_2,\partial_2\rangle=1$, then $\langle\nabla_{\partial_1}\partial_2,\partial_2\rangle=0$ and, therefore, $\nabla_{\partial_1}\partial_2=A\partial_1+B\xi$ for some functions $A$ and $B$, where $\xi$ is the unit normal of $\Sigma^2$.

To finish the proof, we will necessitate the two facts below:
\begin{fact} 
\emph{If  $\beta :I\rightarrow N\subset M$ is a geodesic of $M$, then $\beta$ is an asymptotic line.}
\end{fact} 

Indeed, if $\xi$ is the unit normal of $N^2$, then $\kappa_n(\beta')=\langle\nabla_{\beta'}\beta',\xi \rangle=0$ and $\beta$ is an asymptotic line.

\begin{fact}
\emph{If $K_{ext}(p)=0$, then every direction is an asymptotic direction if $p$ is a planar point of $N^2$. However, if $p$ is a parabolic point of $N^2$, then there exists a unique asymptotic direction at $p$ and, in addition, it is also a principal direction.}
\end{fact}

Indeed, let $\mathbf{e}_1$ and $\mathbf{e}_2$ be the principal directions at $p$ with principal curvatures $\kappa_1(p)$ and $\kappa_2(p)$, respectively. If $p$ is a planar point, then $\kappa_1(p)=\kappa_2(p)=0$ and, therefore, $\kappa_n(V)\equiv 0$ for all $V\in T_p N^2$. If $p$ is a parabolic point, we may assume that $\kappa_2(p)=0$ but $\kappa_1(p)\neq0$. For every $V=\cos\theta\,\mathbf{e}_1+\sin\theta\,\mathbf{e}_2$ in $T_p N^2$, we have $\kappa_n(V)=\kappa_1\cos^2\theta+\kappa_2\sin^2\theta=\kappa_1\cos^2\theta,$ 
which vanishes if and only if $\cos\theta=0$, i.e., if and only if $V=\pm\,\mathbf{e}_2$.
\newline

Let us complete the proof of the theorem. We have seen that $\nabla_{\partial_1}\partial_2=A\partial_1+B\xi$. Using that $[\partial_1,\partial_2]=0$ implies $\nabla_{\partial_1}\partial_2=\nabla_{\partial_2}\partial_1$, it follows that
\[
    B = \langle \xi,\nabla_{\partial_1}\partial_2\rangle = \langle \xi,\nabla_{\partial_2}\partial_1\rangle = \langle -\nabla_{\partial_2}\xi,\partial_1\rangle=0,
\]
where in the last equality we used that, from Facts 1 and 2, it is valid $-\nabla_{\partial_2}\xi=0$. Consequently, we can write
\begin{equation}\label{eq::Ddel1del2EqualAdel1}
    \nabla_{\partial_1}\partial_2=A\,\partial_1.
\end{equation}
Thus, we can write
\begin{equation}
   \nabla_{\partial_2}\nabla_{\partial_1}\partial_2= \partial_2(A)\partial_1+A\nabla_{\partial_2}\partial_1=\partial_2(A)\partial_1+A\nabla_{\partial_1}\partial_2 = (\partial_2A+A^2)\partial_1. 
\end{equation}
Finally, the sectional curvature of $M^3$ at $\pi_p=\mbox{span}\{\partial_1(p),\partial_2(p)\}$ is
\begin{equation}
    K_p(\partial_1,\partial_2)=\dfrac{\langle R(\partial_2,\partial_1)\partial_2,\partial_1\rangle}{\langle\partial_1,\partial_1\rangle\langle\partial_2,\partial_2\rangle-\langle\partial_1,\partial_2\rangle^2}{\Big\vert_p}=-\left(\frac{\partial A}{\partial v}+A^2\right)\Big\vert_p.
\end{equation}
From the form of $K_p(\partial_1,\partial_2)$ and the arbitrariness of $\pi_p$, the sectional curvature is a function of $p$ only. Thus, from the Schur Theorem \cite{doCarmo1992}, $M^3$ is a space form.
\qed \end{proof}

\begin{remark}
We see from the proof above that for a ruled surface in a generic manifold with rulings orthogonal to the directrix, $K_{ext}=0$ implies that the field $Z$ minimizes rotation along $\alpha$. (Please, see Section 5 for a further discussion of this point.)
\end{remark}

\subsection{Application: extrinsically flat surfaces in product manifolds}

Now, we shall apply the results found in this section to the study of extrinsically flat surfaces in the product manifolds $\mathbb{S}^2\times\mathbb{R}$ and $\mathbb{H}^2\times\mathbb{R}$. In these manifolds, the unit vector field $\partial_t$ tangent to $\mathbb{R}$ plays a prominent role. A surface $\Sigma^2$ is said to be a \emph{constant angle surface} if its unit normal $\xi$ makes a constant angle with $\partial_t$. It is known that constant angle surfaces in product manifolds are extrinsically flat \cite{D+07,DM09}. We may naturally ask whether constant angle surfaces are also ruled surfaces. From Prop. \ref{Prop::R1223zeroIFFruled}, we know it is possible to read this information from the curvature tensor. 

\begin{proposition}\label{prop::CteAnglSurfAreRuled}
Let $\Sigma^2$ be a constant angle surface in $\mathbb{S}^2\times\mathbb{R}$ or $\mathbb{H}^2\times\mathbb{R}$, then $\Sigma^2$ is a ruled surface.
\end{proposition}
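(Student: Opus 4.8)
The plan is to invoke Proposition \ref{Prop::R1223zeroIFFruled}, which reduces the problem to a single curvature identity. Since a constant angle surface is known to be extrinsically flat \cite{D+07,DM09}, it suffices to set up the adapted frame $\{\partial_1,\partial_2,\xi\}$ in which the $v$-curves have vanishing principal curvature and then verify that the mixed curvature component $R_{1223}=\langle R(\partial_1,\partial_2)\partial_2,\xi\rangle$ vanishes. The content of the proof is therefore entirely a computation of the curvature tensor of the product manifold contracted against the normal.

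The key simplification is that $\mathbb{S}^2\times\mathbb{R}$ and $\mathbb{H}^2\times\mathbb{R}$ are Riemannian products, so their curvature tensor has a very restricted form. First I would decompose every tangent vector into its component along the fiber direction $\partial_t$ and its component tangent to the $\mathbb{S}^2$ (or $\mathbb{H}^2$) factor. For a product metric, the curvature tensor $R$ vanishes whenever \emph{any} of its arguments is the flat $\mathbb{R}$-direction $\partial_t$; only the purely horizontal part survives, and there it reduces to the curvature of the space-form factor, which is $\pm 1/r^2$ times the usual constant-curvature expression $\langle X,W\rangle\langle Y,V\rangle-\langle X,V\rangle\langle Y,W\rangle$. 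Writing $\partial_i = \partial_i^{\,h}+\langle\partial_i,\partial_t\rangle\,\partial_t$ and $\xi=\xi^h+\langle\xi,\partial_t\rangle\partial_t$, I would substitute into $R_{1223}$, drop every term containing a $\partial_t$ argument, and be left with a horizontal constant-curvature expression.

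The decisive point is then the angle condition. Because the angle between $\xi$ and $\partial_t$ is constant, $\langle\xi,\partial_t\rangle$ is a fixed constant and the horizontal projection $\xi^h$ has constant norm; this rigidity forces the horizontal part of the geometry to line up so that $R_{1223}$ collapses. Concretely, after projecting, $R_{1223}$ becomes a multiple of a determinant-type expression in the horizontal components of $\partial_1,\partial_2,\xi$, and I would show this vanishes using that $\xi^h$ is, up to the constant scalar factor, determined by $\partial_t$ and the tangent plane, together with the orthogonality relations $\langle\xi,\partial_i\rangle=0$. Once $R_{1223}=0$ is established, Proposition \ref{Prop::R1223zeroIFFruled} immediately yields that the $v$-curves are geodesics, i.e.\ $\Sigma^2$ is ruled.

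The main obstacle I anticipate is purely bookkeeping: correctly isolating which of the four slots in $R(\partial_1,\partial_2)\partial_2$ being non-horizontal actually kills the term, and then confirming that the surviving horizontal piece genuinely vanishes rather than merely simplifying. In particular one must check that the two horizontal vectors among $\{\partial_1^h,\partial_2^h,\xi^h\}$ that feed into the space-form curvature formula are constrained so that the antisymmetrized inner products cancel — this is where the constancy (as opposed to mere existence) of the angle is essential, and it is the step most prone to a sign or projection error. A clean way to organize this is to pick the frame so that $\partial_2$ is the ruling/asymptotic direction and exploit that the constant-angle hypothesis makes $\partial_t$ split with constant coefficients across the frame, so that the relevant determinant is seen to be degenerate by inspection.
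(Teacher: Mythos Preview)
Your overall strategy coincides with the paper's: both invoke Proposition~\ref{Prop::R1223zeroIFFruled} and reduce the question to checking that a single mixed curvature component vanishes in the product manifold. The horizontal/vertical decomposition you describe is also exactly what the paper uses.

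There is, however, a genuine gap in your plan at the ``decisive point.'' You attribute the vanishing of $R_{1223}$ to the \emph{constancy} of the angle, expecting that ``$\partial_t$ splits with constant coefficients across the frame'' is what makes the horizontal determinant degenerate. That is not the mechanism. What actually kills the curvature component is a structural fact from \cite{D+07,DM09} that you never invoke: for a constant angle surface, the principal direction with vanishing principal curvature is precisely the tangential projection of $\partial_t$. Consequently the \emph{other} principal direction is orthogonal to $\partial_t$ and hence purely horizontal. Once that vector, say $\mathbf{e}_2$, satisfies $\mathbf{e}_2=\mathbf{e}_2^{\,h}$, the two horizontal inner products $g_H(\mathbf{e}_2,\mathbf{e}_1^{\,h})$ and $g_H(\mathbf{e}_2,\xi^{\,h})$ both reduce to the full inner products $g(\mathbf{e}_2,\mathbf{e}_1)=0$ and $g(\mathbf{e}_2,\xi)=0$, and the relevant component vanishes immediately. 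The constancy of $\theta$ is used upstream (to get $K_{ext}=0$ and to know \emph{which} direction is asymptotic), not in the curvature algebra itself.

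That this distinction matters is visible in the paper's Remark following Theorem~\ref{Thr::ExistsExtFlatSurfNonCteAngleInH2RandS2R}: with the \emph{same} constant-coefficient splitting $\partial_t=\sin\theta\,\mathbf{e}_1+\cos\theta\,\mathbf{e}_3$, the component with the repeated index on the \emph{other} tangent direction evaluates to $\tfrac12\sin 2\theta\neq 0$. So constant coefficients alone do not force degeneracy; you must know which slot carries the purely horizontal vector, and that comes from identifying the asymptotic direction with the projection of $\partial_t$. Add that input explicitly and your argument goes through.
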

\begin{proof}
Let us do the proof for $M^3=\mathbb{H}^2\times\mathbb{R}$ equipped with the usual product metric $g=g_R+g_H$.  Consider a surface $\Sigma^2$ with unit normal $\mathbf{e}_3=\xi$.  We may decompose the unit vector field $\partial_t$ as $\partial_t=\sin\theta\, \mathbf{e}_1+\cos\theta\,\mathbf{e}_3$, where $\mathbf{e}_1$ is a unit tangent vector and $\theta$ is the angle between $\partial_t$ and $\xi$, and let $\mathbf{e}_2$ be a unit tangent vector orthogonal to $\mathbf{e}_1$. The curvature tensor of $M^3$ is given by $R^M(X,Y,Z,W) = g_H(X^H,Z^H)g_H(Y^H,W^H)-g_H(X^H,W^H)g_H(Y^H,Z^H)$ \cite{DM09}, where $X^H$ denotes the projection of  $X$ to the tangent space of $\mathbb{H}^2$, while the $\mathbb{R}$-projection is denoted by $X^R=X-X^H$. Let us compute the coefficient $R_{2113}^M$ (\footnote{To apply Prop. \ref{Prop::R1223zeroIFFruled} the repeated index must correspond to the direction of the asymptotic curves, in which case the proposition then tells us whether they are geodesics of the ambient space or not. Here, we are following the notation of \cite{DM09} for the choice of $\mathbf{e}_1$ and $\mathbf{e}_2$, see, e.g., the Eq. (7) of \cite{DM09}.}). First, we have $g_H(e_1^H,\mathbf{e}_2)=g(\mathbf{e}_1,\mathbf{e}_2)-g_R(e_1^R,0)=0$, where we used that $g(\mathbf{e}_i,\mathbf{e}_j)=\delta_{ij}$ and that $\mathbf{e}_2=e_2^H$. In addition, we have
$g_H(\mathbf{e}_2,e_3^H)=g(\mathbf{e}_2,\mathbf{e}_3)-g_R(e_2^R,e_3^R)=g(\mathbf{e}_2,\mathbf{e}_3)=0$. Therefore, $R^M_{2113}=0$ and Prop. \ref{Prop::R1223zeroIFFruled} implies constant angle surfaces are ruled.
\qed \end{proof}

This proposition is compatible with Theorem 3.2 of \cite{DM09} and Theorem 2 of \cite{D+07}, but note that not every ruled surface makes a constant angle with $\partial_t$. In fact, the $\mathbb{R}$-coordinate of such surfaces does not depend on the parameter of the generating curve. Now, we prove that there exist extrinsically flat surfaces that are not of constant angle.

\begin{theorem}\label{Thr::ExistsExtFlatSurfNonCteAngleInH2RandS2R}
There exist extrinsically flat surfaces in $\mathbb{H}^2\times\mathbb{R}$ and $\mathbb{S}^2\times\mathbb{R}$ which do not make a constant angle with $\partial_t$. In addition, the principal direction with vanishing principal curvature must be orthogonal to $\partial_t$.
\end{theorem}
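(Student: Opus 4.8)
The plan is to use the Main Theorem (Theorem \ref{Theor::CharSpcFraRuledSurf}) in its contrapositive form. Since neither $\mathbb{H}^2\times\mathbb{R}$ nor $\mathbb{S}^2\times\mathbb{R}$ has constant sectional curvature, Theorem \ref{Theor::CharSpcFraRuledSurf} tells us that at least one of its two hypotheses must fail: either there is some $2$-plane $\pi_p$ tangent to which no extrinsically flat surface exists, or else not every extrinsically flat surface is ruled. First I would argue that the first hypothesis actually holds in these product manifolds, so the failure must occur in the second. To see that extrinsically flat surfaces tangent to an arbitrary plane exist, I would exhibit them concretely: the constant angle surfaces studied in \cite{D+07,DM09} are extrinsically flat, and by varying the constant angle and the choice of tangent curve one realizes every prescribed tangent plane at a point. (Alternatively, one can invoke the local existence of surfaces with a prescribed second fundamental form of rank one; but the explicit constant-angle family is cleaner.)

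Granting that every $2$-plane is tangent to some extrinsically flat surface, the contrapositive of Theorem \ref{Theor::CharSpcFraRuledSurf} forces the existence of an extrinsically flat surface in $\mathbb{H}^2\times\mathbb{R}$ (resp. $\mathbb{S}^2\times\mathbb{R}$) that is \emph{not} ruled. This is the crux of the first assertion. Since we have just shown every constant angle surface \emph{is} ruled (the preceding Proposition), a non-ruled extrinsically flat surface cannot be of constant angle. Hence there exist extrinsically flat surfaces that do not make a constant angle with $\partial_t$, which is exactly the first claim.

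For the second assertion, I would work directly with the curvature-tensor criterion of Prop. \ref{Prop::R1223zeroIFFruled}. Let $\Sigma^2$ be extrinsically flat with principal direction $\mathbf{e}_2$ of vanishing principal curvature, and decompose $\partial_t=\sin\theta\,\mathbf{e}_1+\cos\theta\,\mathbf{e}_3$ as in the preceding proof, where $\mathbf{e}_3=\xi$. The key point is that the surface fails to be ruled precisely when the asymptotic curves (tangent to $\mathbf{e}_2$) are not geodesics, i.e. when $R_{2113}\neq0$ in the notation used above. Using the explicit product curvature tensor $R^M(X,Y,Z,W)=g_H(X^H,Z^H)g_H(Y^H,W^H)-g_H(X^H,W^H)g_H(Y^H,Z^H)$, I would compute $R_{2113}$ in terms of the horizontal projections and show that the obstruction to $\mathbf{e}_2$ being geodesic is governed by the horizontal components of $\mathbf{e}_2$ relative to $\partial_t$; requiring $\mathbf{e}_2$ to lie in the principal direction of an extrinsically flat but non-ruled surface forces $g(\mathbf{e}_2,\partial_t)=0$, i.e. $\mathbf{e}_2\perp\partial_t$.

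The main obstacle I anticipate is the second step: establishing that the vanishing-curvature principal direction is necessarily orthogonal to $\partial_t$. The first claim is a clean logical consequence of the Main Theorem once the tangency hypothesis is verified, but pinning down the orthogonality requires a careful curvature computation and a correct bookkeeping of which repeated index in $R_{ijk3}$ corresponds to the asymptotic direction. Because the product curvature tensor annihilates any vector with purely vertical ($\mathbb{R}$-direction) projection, the condition $R_{2113}\neq0$ that signals non-ruledness can only be sustained when $\mathbf{e}_2$ has no component along $\partial_t$; I would make this precise by contradiction, assuming $\mathbf{e}_2$ had a nonzero $\partial_t$-component and showing the surface would then be forced to be ruled, contradicting the non-constant-angle (hence non-ruled, by the preceding Proposition) character established in the first part.
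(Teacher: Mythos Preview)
Your argument follows the paper's almost exactly. For the first assertion both you and the paper run the contrapositive of Theorem~\ref{Theor::CharSpcFraRuledSurf}: constant-angle surfaces realize every tangent $2$-plane, are extrinsically flat, and (by the preceding Proposition) ruled; since neither product manifold is a space form, some extrinsically flat surface must be non-ruled and hence non-constant-angle. For the second assertion both proofs argue by contradiction via Proposition~\ref{Prop::R1223zeroIFFruled} and the explicit product curvature tensor.

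Two bookkeeping points in your second step need fixing. The paper labels the asymptotic (vanishing-principal-curvature) direction $\mathbf{e}_1$, not $\mathbf{e}_2$, and then argues: if $g(\mathbf{e}_1,\partial_t)\neq 0$, the computation $R^M_{2113}=0$ from the preceding Proposition applies and the surface is ruled, contradicting part one. With your labeling (asymptotic $=\mathbf{e}_2$) the relevant component would be $R_{1223}$, not $R_{2113}$, and the decomposition $\partial_t=\sin\theta\,\mathbf{e}_1+\cos\theta\,\mathbf{e}_3$ you borrow already presupposes $g(\mathbf{e}_2,\partial_t)=0$, which is precisely what you are trying to prove. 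Also, your parenthetical ``non-constant-angle (hence non-ruled, by the preceding Proposition)'' reverses the implication: that Proposition gives constant-angle $\Rightarrow$ ruled, so its contrapositive is non-ruled $\Rightarrow$ non-constant-angle; the contradiction you actually want in part two is against the non-ruledness established in part one, not against the non-constant-angle property.
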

\begin{proof}
Since there exists a constant angle surface tangent to any given 2-plane, every constant angle surface is ruled, and neither $\mathbb{H}^2\times\mathbb{R}$ nor $\mathbb{S}^2\times\mathbb{R}$ is a space form, then Theorem \ref{Theor::CharSpcFraRuledSurf} implies that there exists an extrinsically flat surface which is not ruled and, consequently, such surface can not be of constant angle. 

Concerning the direction $\mathbf{e}_1$ of the principal curves with vanishing principal curvature, if it were $g(\mathbf{e}_1,\partial_t)\not=0$. Then, we would conclude that $R^M_{2113}=0$, which implies that the corresponding extrinsically surface is ruled (see calculations in the proof of Prop. \ref{prop::CteAnglSurfAreRuled}). Therefore, we must have $g(\mathbf{e}_1,\partial_t)=0$. 
\qed \end{proof}

\begin{remark}
Again, consider in $\mathbb{H}^2\times\mathbb{R}$ the decomposition $\partial_t=\sin\theta\,\mathbf{e}_1+\cos\theta\,\mathbf{e}_3$ and the 2-plane with normal $\mathbf{e}_3$ spanned by $\{\mathbf{e}_1,\mathbf{e}_2\}$, where $g(\mathbf{e}_i,\mathbf{e}_j)=\delta_{ij}$. We have seen that $R^M_{2113}=0$. Now, let us compute $R^M_{1223}$. First, $g_H(e_1^H,\mathbf{e}_2)=g(\mathbf{e}_1,\mathbf{e}_2)-g_R(e_1^R,0)=0$. Using $\mathbf{e}_1=\sin\theta\,\partial_t+\cos\theta\,e_1^H$ and $\mathbf{e}_3=\cos\theta\,\partial_t+\sin\theta\,e_3^H$ give  $g_H(e_1^H,\mathbf{e}_3^H)=g(\mathbf{e}_1,\mathbf{e}_3)-g_R(e_1^R,e_3^R)=-\frac{1}{2}\sin2\theta$. Finally, we conclude that $R_{1223}^M=\frac{1}{2}\sin2\theta$ and, therefore, unless the surface is a vertical cylinder over a curve in $\mathbb{H}^2$ or a copy of $\mathbb{H}^2$, i.e., $\theta=0$ or $\theta=\frac{\pi}{2}$, respectively, an extrinsically flat surface with principal curve tangent to $\mathbf{e}_2$, which is orthogonal to $\partial_t$, can not be ruled. This confirms the second part of Thr. \ref{Thr::ExistsExtFlatSurfNonCteAngleInH2RandS2R} concerning the asymptotic curves of extrinsically flat surfaces which {are} not of constant angle.
\end{remark}

\section{Concluding remarks}

For Riemannian space forms, we filled an existing gap in the literature concerning
the fundamentals of ruled surfaces. Namely, we introduced the concepts of striction
curves and distribution parameter and computed the first and second fundamental
forms. As in Euclidean space, these concepts allowed us to locate the singularities
of ruled surfaces as well as characterize the vanishing of the extrinsic Gaussian
curvature.

It is well known that extrinsically flat surfaces in space forms must be foliated by
geodesics. We provided a model-independent proof for this result, which allowed us
to identify the necessary and sufficient condition the curvature tensor must satisfy in
order to guarantee the validity of this implication: namely, $R_{trrn}=0$, where $n$ refers
to the direction normal to the surface, $r$ refers to the direction of the rulings, and $t$
refers to the remaining tangent direction. Further, we characterized space forms as
the only manifolds with the properties that there exist an extrinsically flat surface
tangent to any 2-plane and that these surfaces are all ruled. This result is part of our
ongoing effort to find characteristic properties of space forms \cite{dSdS20}. As an application
of our findings, we proved that there must exist extrinsically flat surfaces in the
product manifolds $\mathbb{H}^2\times\mathbb{R}$ and $\mathbb{S}^2\times\mathbb{R}$ that do not make a constant angle with the
real direction. This opens the problem of actually constructing such surfaces. Since
product manifolds are not space forms, there is also the quest of finding the properties
an extrinsically flat surface in such spaces must satisfy in order to be ruled.
The theory of ruled surfaces in product manifolds is currently under investigation
and shall be the content of a follow-up publication. It would be desirable to better


Finally, we want to point the similarity between the proofs of Thr. \ref{Theor::CharSpcFraRuledSurf} and of Thr.  2 of \cite{dSdS20}. Namely,  Eq. \eqref{eq::Ddel1del2EqualAdel1} which is crucial in our proof plays a role similar to that of  the umbilicity condition  $\nabla_XY=-\lambda X$ in \cite{dSdS20}. This similarity may be interpreted in terms of rotation minimizing vector fields. More precisely, our results show that in space forms we have $K_{ext}\propto (\alpha',Z,\nabla_{\alpha'}Z)$, which implies that for $Z$ {orthogonal} to $\alpha$, minimizing rotation is an if-and-only-if condition for a ruled surface to be extrinsically flat. As the implication ``$K_{ext}=0\Rightarrow $ ruled" is characteristic of constant curvature, it is reasonable to see the minimizing rotation property playing a role. In a generic Riemannian manifold, $K_{ext}$ of ruled surfaces is always proportional to $h_{12}=\langle\nabla_{\partial_1}\partial_2,\xi\rangle$. Then, when the rulings are normal to $\alpha$, $K_{ext}=0$ \emph{along the directrix} if and only if $Z=\partial_2\vert_{\alpha}$ minimizes rotation. This suggests that in general $K_{ext}\propto (\alpha',Z,\nabla_{\alpha'}Z)$ at least along $\alpha$. It remains to verify whether this is also true for the remaining points of the ruled surface or whether $K_{ext}\propto (\alpha',Z,\nabla_{\alpha'}Z)$ provides yet another characteristic property of constant curvature. 

\begin{acknowledgements}
The authors would like to thank useful discussions with Jos\'e Alan Farias dos Santos (Recife, Brazil). In addition, LCBdS would like to thank the financial support provided by the Mor\'a Miriam Rozen Gerber Fellowship for Brazilian Postdocs and the Faculty of Physics Postdoctoral Excellence Fellowship.
\newline
\textbf{Conflict of interest.} The authors declare they have no conflict of interest.
\end{acknowledgements}


\end{document}